\title{Finding a Hamilton cycle fast on average using rotations and extensions}
\author{
Yahav Alon
\thanks{‡School of Mathematical Sciences, Raymond and Beverly Sackler Faculty of Exact Sciences, Tel Aviv University,
Tel Aviv, 6997801, Israel. Email: yahavalo@mail.tau.ac.il.}
\and Michael Krivelevich
\thanks{School of Mathematical Sciences, Raymond and Beverly
Sackler Faculty of Exact Sciences, Tel Aviv University, Tel Aviv,
6997801, Israel. Email: krivelev@tauex.tau.ac.il. Partially supported by USA-Israel BSF grant 2014361, and by ISF grant 1261/17.}
}
\begin{document}
\maketitle
\newtheorem{thm}{Theorem}
\newtheorem{propos}{Proposition}
\newtheorem{defin}{Definition}
\newtheorem{lemma}{Lemma}[section]
\newtheorem{corol}{Corollary}
\newtheorem{thmtool}{Theorem}[section]
\newtheorem{corollary}[thmtool]{Corollary}
\newtheorem{lem}[thmtool]{Lemma}
\newtheorem{defi}[thmtool]{Definition}
\newtheorem{prop}[thmtool]{Proposition}
\newtheorem{clm}[thmtool]{Claim}
\newtheorem{conjecture}{Conjecture}
\newtheorem{problem}{Problem}
\newcommand{\Proof}{\noindent{\bf Proof.}\ \ }
\newcommand{\Remarks}{\noindent{\bf Remarks:}\ \ }
\newcommand{\Remark}{\noindent{\bf Remark:}\ \ }
\newcommand{\cupdot}{\mathbin{\mathaccent\cdot\cup}}

\begin{abstract}
We present an algorithm \emph{CRE}, which either finds a Hamilton cycle in a graph $G$ or determines that there is no such cycle in the graph. The algorithm's expected running time over input distribution $G\sim G(n,p)$ is $(1+o(1))n/p$, the optimal possible expected time, for $p=p(n) \geq 70n^{-\frac{1}{2}}$. This improves upon previous results on this problem due to Gurevich and Shelah, and to Thomason.
\end{abstract}

\section{Introduction} \label{sec-intro} 

Hamilton cycles are a central topic in modern graph theory, a fact that extends to the field of random graphs as well, with numerous and diverse results regarding the appearance of Hamilton cycles in random graphs obtained over many years.\\
Consider the random graph model $G(n,p)$, in which every one of the edges of $K_n$ is added to $G$ with probability $p$ independently of the other edges. A classical result by Koml{\'o}s and Szemer{\'e}di \cite{KS83}, and independently by Bollob{\'a}s \cite{B84}, states that a random graph $G\sim G(n,p)$, with $np-\ln n - \ln \ln n \rightarrow \infty$, is with high probability  Hamiltonian. It should also be noted that if $np -\ln n - \ln \ln n \rightarrow -\infty$ then with high probability $\delta (G) \leq 1$, and thus $G$ is not Hamiltonian.\\
In fact, a stronger result was proved by Bollob{\'a}s in \cite{B84} and by Ajtai, Koml{\'o}s and Szemer{\'e}di in \cite{AKS85}. It states that the \textit{hitting time} of graph Hamiltonicity is with high probability equal to the hitting time of the property $\delta (G) \geq 2$. In other words: if one adds edges to an empty graph on $n$ vertices in a random order, then with high probability the exact edge whose addition to the graph has increased its minimal degree to $2$, has also made the graph Hamiltonian.\\
In light of this, one can ask whether there exists a computationally efficient way to find a Hamilton cycle in a graph $G$, or to determine that it contains none, provided that $G$ is sampled from the probability space $G(n,p)$ with $np-\ln n - \ln \ln n \rightarrow \infty$.\\
The answer to this question differs greatly depending on how one defines the term ``computationally efficient".\\
For example, if our interest lies in finding an algorithm with a fast worst case time complexity, that is, its running time on any input is bounded by some ``small" function of the number of vertices $n$, we might get disappointed. This is due to the fact that the graph Hamiltonicity problem is a well known NP-complete problem (see e.g. \cite{GJL}), and as such no polynomial time algorithm solving it is known. In fact, the best known worst case complexity algorithm is achieved by dynamic programming algorithms (see Bellman \cite{BELL} and Held, Karp \cite{HK}), with asymptotic time $O\left( 2^n\cdot n^2 \right)$.\\
That said, different models of complexity may yield very different results. Consider for example a model in which an algorithm is allowed to return the result \emph{``failure"}, admitting that it has failed to find a Hamilton cycle in the input graph (without providing a proof that there is none), under the condition that if $p \geq f(n)$ and $G\sim G(n,p)$ then the probability that the algorithm fails on input $G$ is of order $o(1)$.\\
In this model, much faster algorithms are available. A notable example is given in a 1987 paper by Bollob{\'a}s, Fenner and Frieze \cite{BFF}, who present an algorithm \emph{HAM1} with time complexity $O\left( n^{4+\varepsilon} \right)$ with $\varepsilon > 0$ arbitrarily small, that either finds a Hamilton cycle or returns \emph{``failure"}. They further show that if the input graph $G$ is distributed $G\sim G(n,p)$, for any $p=p(n)$, then
$$
\lim _{n\rightarrow \infty} Pr[\mbox{\emph{HAM1} finds a Hamilton cycle in }G] = \lim _{n\rightarrow \infty} Pr[G\ \mbox{is Hamiltonian}].
$$
Combined with the above stated fact that if $np-\ln n - \ln \ln n \rightarrow \infty$ then $G$ is with high probability Hamiltonian, this means that for $p \geq \frac{\ln n + \ln \ln n + \omega (1)}{n}$ the probability that \emph{HAM1} returns \emph{``failure"} is indeed $o(1)$.\\
Another example of a fast algorithm that is not likely to return \emph{``failure"} is given in \cite{FKSV}, where the authors choose to measure the complexity by the number of positive edge query results the algorithm requires. They show an algorithm that requires $(1+o(1))n$ successful queries, and fails with probability $o(1)$ on graphs distributed according to $G(n,p)$, with $p \geq \frac{\ln n + \ln \ln n + \omega (1)}{n}$.\\

An intuitive measure of complexity which seems interesting to consider is the \emph{expected} running time. Denote by $T_A(G)$ the running time of some algorithm \emph{A} on an input graph $G$. Say $G\sim G(n,p)$, how small can $\mathbb{E}\left[ T_A(G) \right]$ be?\\
If we assume that there is no polynomial time algorithm that finds a Hamilton cycle in a graph, then finding an algorithm with polynomial expected running time is in some sense a more difficult problem than that of finding a polynomial time algorithm that fails with probability $o(1)$: if the expected time is polynomial, it means that those cases on which the running time is super-polynomial take up at most $n^{-\omega (1)}$ of the probability space. So such an algorithm can be used to construct a polynomial time algorithm that returns \emph{``failure"} with probability $n^{-\omega (1)}$.\\
Bollob{\'a}s, Fenner and Frieze \cite{BFF} used their algorithm \emph{HAM1} to construct a slightly modified algorithm \emph{HAM}, which applies an exponential running time algorithm on inputs on which \emph{HAM1} returned \emph{``failure"}, and prove that the expected running time of \emph{HAM} on $G\sim G\left( n,\frac{1}{2} \right)$ is polynomial in $n$.\\
Gurevich and Shelah \cite{GS87} improved upon this result, by presenting an algorithm \emph{HPA}, which finds a Hamiltonian $s-t$ path in a graph $G$, with a linear expected running time, where this time the input is assumed to be distributed according to distribution $G(n,p)$, with $p\in [0,1]$ being a constant (not necessarily $\frac{1}{2}$). This can easily be altered into an algorithm that finds a Hamilton cycle rather than a Hamilton $s-t$ path. They did this by presenting three consecutive algorithms \emph{HPA1, HPA2, HPA3}, such that failure of one algorithm to find a Hamilton $s-t$ path results in the next one being called, and such that \emph{HPA1} takes linear time and $$Pr[\mbox{\emph{HPAi} fails on }G]\cdot \mathbb{E}\left[ T_{HPA(i+1)}(G)\right] = O(n).$$
They further show that their result is optimal for this range of $p$, by proving a stronger claim: If $A$ is an algorithm for finding a Hamilton cycle and $p \geq \frac{3 \ln n}{n}$, $G\sim G(n,p)$, then $\mathbb{E}\left[ T_A(G)\right] \geq n / p$. This result can be obtained by observing that in order to find a Hamilton cycle in a graph $G$, the algorithm must sample at least $n$ existing edges of $G$, which means that the expected number of queried pairs of vertices in $A$ must be at least the expected number of queries required for finding $n$ edges, which is exactly $n/p$.\\
Further improvement was later given by Thomason \cite{THOM}, who presented an algorithm \emph{A}, similarly constructed of three consecutive algorithms \emph{A1,A2,A3}. The expected running time of \emph{A} is asymptotically optimal up to multiplication by a constant (that is $\mathbb{E}\left[ T_A(G) \right] = O(n/p)$), for a wider class of random graphs: whenever $p\geq 12n^{-\frac{1}{3}}$.\\
For further reading on the algorithmic aspects of random graphs, including Hamiltonicity, we refer to \cite{FM}.\\
In this paper we present a new algorithm \emph{CRE} (Cycle rotation extension) for finding a Hamilton cycle, and prove that if $p\geq 70 n^{-\frac{1}{2}}$ and $G\sim G(n,p)$ then $\mathbb{E}\left[ T_{CRE}(G) \right] = (1+o(1))n/p$. This constitutes a substantial progress in a long-standing open problem on Hamiltonicity of random graphs (see e.g., \textbf{Problem 16} in \cite{FRI}).\\
\noindent Formally, we prove the following main result:

\begin{thm}
Let $p\geq 70 n^{-\frac{1}{2}}$ and let $G\sim G(n,p)$. There is an algorithm for finding a Hamilton cycle in a graph, with expected running time $(1+o(1))n/p$ on $G$.
\end{thm}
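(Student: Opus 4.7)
My proof plan would follow the Gurevich--Shelah / Thomason cascade strategy, but with a much tighter analysis of the Pósa rotation--extension phase, tailored to the density $p\geq 70n^{-1/2}$. Concretely, I would structure \emph{CRE} as three consecutive procedures $\mathit{CRE}_1, \mathit{CRE}_2, \mathit{CRE}_3$: $\mathit{CRE}_1$ is the main rotate--extend algorithm with nearly optimal expected running time; $\mathit{CRE}_2$ is a polynomial-time backup such as the HAM1 algorithm of Bollob{\'a}s--Fenner--Frieze, running in $O(n^{4+\varepsilon})$ time and failing with probability $o(1)$; and $\mathit{CRE}_3$ is an exponential Held--Karp solver used only if both fail. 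The goal is to verify
\[
\Pr[\mathit{CRE}_i\text{ fails}]\cdot \mathbb{E}[T_{\mathit{CRE}_{i+1}}] = o(n/p)
\]
for $i=1,2$, and to show $\mathbb{E}[T_{\mathit{CRE}_1}] = (1+o(1))n/p$; the matching lower bound is already recorded in the introduction.

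$\mathit{CRE}_1$ maintains a path $P$ together with the set of pairs it has already queried, and operates in three layers. In a \emph{greedy growth} layer, while the current endpoint $v$ has an unqueried neighbour among the unvisited set $U=V\setminus V(P)$, the algorithm queries one such pair and, if the edge is present, attaches the new vertex; this makes each absorption a geometric trial with success probability $p$, hence $1/p$ queries in expectation. In a \emph{rotation} layer, invoked whenever greedy extension fails while $|U|$ is still large, Pósa rotations from $v$ generate a set $S$ of reachable endpoints; the density $p\geq 70 n^{-1/2}$ is just enough to guarantee, by a standard expansion argument, that whp $|S|=\Omega(n)$ after $O(\log n)$ rotations, so a fresh edge from $S$ to $U$ is found in $O(1/p)$ further queries. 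Finally, a \emph{closure} layer absorbs any stubborn residue of $U$ via two-sided rotations and then joins the endpoints of the resulting Hamilton path into a cycle with another rotation--extension sweep.

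Writing $T_{\mathit{CRE}_1}=T_{\mathrm{gr}}+T_{\mathrm{rot}}+T_{\mathrm{cls}}$, the main term $\mathbb{E}[T_{\mathrm{gr}}]=(1+o(1))n/p$ would come from a Wald-style identity: each of the roughly $n$ absorptions needs $1/p$ queries in expectation, and the ``query each pair at most once'' bookkeeping preserves the independence required to treat these queries as genuine geometric waiting times. The remaining two contributions must be pushed into the error term. For $T_{\mathrm{rot}}$ the task reduces to showing that only $o(n)$ greedy absorptions trigger a rotation, which in turn follows from a tail bound on the event that the current endpoint has no surviving neighbour in the $\Omega(n)$ vertices not yet queried against it; the bound $p\geq 70 n^{-1/2}$ supplies both the expansion threshold and the deviation exponent. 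For $T_{\mathrm{cls}}$, a single two-sided rotation round suffices whp and contributes only $O(\sqrt{n}\log n)=o(n/p)$ queries.

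The principal obstacle is the extreme tightness of the leading constant $1+o(1)$. Because no constant-factor slack is available, every query spent on a rotation, on a re-examination of a known edge, or on the closure phase must be absorbed into the $o(n/p)$ error term. This will demand a careful reuse of information (edges exposed as by-products of rotations should be inserted into $P$ when useful and never re-queried), a sharp concentration estimate showing that the ``stuck'' event during greedy growth occurs at only $o(n)$ of the $n$ absorption steps with failure probability so small that even the $O(n^{4+\varepsilon})$ runtime of $\mathit{CRE}_2$ contributes $o(n/p)$ in expectation, and a verification that $\Pr[\mathit{CRE}_1\text{ fails}]=n^{-\omega(1)}$ so that the exponential $\mathit{CRE}_3$ fallback is also negligible. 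Once these three bookkeeping tasks are discharged, the main identity $\mathbb{E}[T_{\mathit{CRE}_1}]=(1+o(1))n/p$ follows by summing the geometric waiting times over the $n$ absorption steps.
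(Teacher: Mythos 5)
Your overall cascade architecture matches the paper's, but there is a genuine gap at the second link of the chain, and it is exactly where the real work of the paper lies. You propose to use an off-the-shelf polynomial algorithm (HAM1 of Bollob\'as--Fenner--Frieze) as the middle stage, citing its failure probability $o(1)$, and an exponential exact solver as the last stage. For the expected time to come out as $(1+o(1))n/p = O(n^{3/2})$ you need
$\Pr[\mathit{CRE}_2\text{ fails}]\cdot \mathbb{E}[T_{\mathit{CRE}_3}] = o(n/p)$,
and since $T_{\mathit{CRE}_3} = 2^{\Theta(n)}n^{O(1)}$, this forces the middle algorithm's failure probability to be of order $2^{-\Theta(n)}n^{-\omega(1)}$ --- exponentially small with the right constant. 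A guarantee of $o(1)$, or even $n^{-\omega(1)}$, is nowhere near sufficient; note also that your closing remark that ``$\Pr[\mathit{CRE}_1\text{ fails}]=n^{-\omega(1)}$ so that the exponential $\mathit{CRE}_3$ fallback is also negligible'' conflates the two links: it is the failure probability of the stage directly preceding the exponential solver that must beat $2^{\Theta(n)}$, and polynomial smallness cannot do that. The known analyses of HAM1 do not give an exponentially small failure bound at $p\approx n^{-1/2}$ (the polynomial-expected-time result of Bollob\'as--Fenner--Frieze is for $p=\tfrac12$). The paper resolves this by designing a bespoke middle algorithm that handles low-degree vertices via a maximum $\leq 2$-matching and a dynamic-programming step on a set of size $O(\sqrt n)$, and proving it succeeds \emph{deterministically} on every graph satisfying an explicit pseudo-random edge-distribution property $(P)$, with $\Pr[G\notin(P)] = 2^{-2n}n^{-\omega(1)}$; without an ingredient of this kind your plan does not close.

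A secondary concern is the query budget of your rotation layer in $\mathit{CRE}_1$. Building a P\'osa endpoint set of size $\Omega(n)$ requires exposing $\Omega(n)$ adjacencies of path vertices per stuck event; even if stuck events are confined to the last $O(\sqrt n\log n)$ absorptions, this already threatens a cost of order $n^{1.5}\log n$, which is \emph{not} $o(n/p)$ at $p=70n^{-1/2}$. The paper's first stage avoids this by a much cheaper local maneuver (about $2n^{1/3}$ successful queries per leftover vertex), and it only needs failure probability $o(n^{-60})$ there because the next stage is polynomial. So your leading-term accounting is plausible, but the rotation bookkeeping needs to be made as frugal as the closure step you describe, and the exponential tail of the cascade needs the exponentially small failure guarantee discussed above.
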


As the algorithm's name suggests, we will try and employ techniques inspired by P{\'o}sa's \emph{rotation-extension}, which were introduced by P{\'o}sa in 1976 \cite{POS} in his research of Hamiltonicity in random graphs. Informally put, \emph{rotation-extension} is a technique which under certain conditions allows one to gradually extend paths or cycles in a graph, by finding (through a process usually referred to as a rotation) a large number of pairs of vertices, such that the existence of an edge between any of these pairs enables one to get a longer path or cycle (an extension) using this edge.\\
Similarly to the previous results, we will define \emph{CRE} by aligning three algorithms, each calling the next one in case of failure. In essence, the three algorithms will be:
\begin{itemize}
	\item \emph{CRE1} -- A simple greedy algorithm, tasked with optimizing the expected time complexity.
	\item \emph{CRE2} -- The main algorithm, tasked with finding a Hamilton cycle in polynomial time in all but an exponentially small fraction of the probability space.
	\item \emph{CRE3} -- An exponential running time algorithm tasked with finding a Hamilton cycle in the graph when the previous two algorithms failed. This algorithm is identical to \emph{HPA3}.
\end{itemize}

\noindent In Section \ref{sec-per} we present some preliminaries. In Section \ref{sec-alg} we present the \emph{CRE} algorithm, and prove its correctness. In Section \ref{sec-time} we prove that the expected running time of \emph{CRE} is $(1+o(1))n/p$. In Section \ref{remarks} we add some concluding remarks.

\section{Preliminaries} \label{sec-per}

In this section we provide several definitions and results to be used in the following sections.\\
Throughout the paper, it is assumed that all logarithmic functions are in the natural base, unless explicitly stated otherwise.\\
We suppress the rounding notation occasionally to simplify the presentation.\\
The following standard graph theoretic notations will be used:
\begin{itemize}
\item $N_G(U)$ : the external neighbourhood of a vertex subset $U$ in the graph $G$, i.e.
$$
N_G(U) = \lbrace v \in V(G)\setminus U:\ v\ \mbox{has\ a\ neighbour\ in}\ U \rbrace.
$$
\item $e_G(U)$: the number of edges spanned by a vertex subset $U$ in a graph $G$. This will sometimes be abbreviated as $e(U)$, when the identity of $G$ is clear from the context.
\item $e_G(U,W)$: the number of edges of $G$ between the two disjoint vertex sets $U,W$. This will sometimes be abbreviated as $e(U,W)$ when $G$ is clear from the context.
\end{itemize}
\noindent Furthermore, given a cycle or a path $S$ in a graph, with some orientation, we denote:
\begin{itemize}
\item $S^{-1}$: the cycle composed of the vertices and edges of $S$, but with the opposite orientation.
\item $s_S(v)$: the successor of a vertex $v \in S$ on $S$, according to the given orientation. When the identity of the cycle is clear, we will write $s(v)$.
\item $s_S(U)$: the set of successors $\{s_S(u):\ u\in U\}$. When the identity of the cycle is clear, we will write $s(U)$.
\item $p_S(v)$: the predecessor of a vertex $v \in S$ on $S$, according to the given orientation. When the identity of the cycle is clear, we will write $p(v)$.
\item $p_S(U)$: the set of predecessors $\{p_S(u):\ u\in U\}$. When the identity of the cycle is clear, we will write $p(U)$.
\item $S(v \rightarrow u)$: the path $\left( v,s_S(v),s^2_S(v),...,p_S(u),u \right) \subseteq S$.
\end{itemize}

\noindent Gearing towards our concrete setting of a graph $G$ distributed according to $G(n,p)$ with $p\geq 70n^{-\frac{1}{2}}$, given a graph $G$, we will define the set of vertices with small degree (with regards to the expected degree) in $G$:
\begin{defin}\label{small}
Let $G$ be a graph on $n$ vertices. The set $\mathit{SMALL}(G)$ is defined as
$$\mathit{SMALL}(G):= \lbrace v\in V(G) \mid d(v) < 40\sqrt{n} \rbrace .$$
\end{defin}
\noindent We shall also make use of the following definition:\\

\begin{defin}
Let $\Gamma = \left( X \cup Y,E \right)$ be a bipartite graph. An edge subset $M \subseteq E(\Gamma )$ is called a \emph{$\leq 2$-matching} from $X$ to $Y$ if each vertex of $X$ is incident to at most $2$ edges in $M$, and each vertex of $Y$ is incident to at most one edge in $M$. A \emph{maximum $\leq 2$-matching} in $\Gamma$ is a $\leq 2$\emph{-matching} with the maximum possible number of edges.
\end{defin}

\noindent We note that given a bipartite graph $\Gamma = \left( X \cup Y ,E\right)$, a \emph{maximum $\leq 2$-matching} from $X$ to $Y$ can be found in time $(|X|+|Y|)^{O(1)}$ by using the \emph{MaxFlow} algorithm.\\

\noindent For some of our probabilistic bounds, we will use the following standard result throughout the paper:
\begin{lem}\label{chernoff}
{\em (Chernoff bound for binomial tails, see e.g. \cite{CHER})} Let $X\sim Bin(n,p)$. Then for every $\delta > 0$, $Pr[X < np - \delta ] \leq \exp \left( -\frac{\delta ^2}{2np} \right) .$
\end{lem}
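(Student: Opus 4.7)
The plan is to use the standard Chernoff method: exponentiate, apply Markov's inequality, optimize over a free parameter, and then reduce the bound to an elementary analytic inequality. For any $t>0$,
$$
\Pr[X < np - \delta] = \Pr[e^{-tX} > e^{-t(np-\delta)}] \leq e^{t(np-\delta)}\, \mathbb{E}[e^{-tX}].
$$
Since $X$ is a sum of $n$ i.i.d.\ Bernoulli$(p)$ variables, $\mathbb{E}[e^{-tX}] = (1 - p + pe^{-t})^n$, and using $1 + y \leq e^{y}$ this is at most $\exp(np(e^{-t} - 1))$. Hence
$$
\Pr[X < np - \delta] \leq \exp\bigl( np(e^{-t} - 1) + t(np - \delta) \bigr).
$$

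The right-hand side is minimized (as a function of $t > 0$) by $t^{\ast} = -\ln(1 - \delta/(np))$, which is legal provided $\delta < np$; if $\delta \geq np$ the claim is trivial since $X \geq 0$. Substituting $t^{\ast}$ and setting $x := \delta/(np) \in [0,1)$, the exponent simplifies to $np\bigl(-x - (1-x)\ln(1-x)\bigr)$. It therefore suffices to prove the single analytic inequality
$$
x + (1-x)\ln(1-x) \geq \frac{x^2}{2} \qquad \text{for } x \in [0,1).
$$

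This last inequality is the only real content and the place where I would spend the most care. I would set $g(x) := x + (1-x)\ln(1-x) - x^2/2$, observe $g(0) = 0$, differentiate to get $g'(x) = -\ln(1-x) - x$ with $g'(0) = 0$, and then compute $g''(x) = x/(1-x) \geq 0$ on $[0,1)$. Hence $g'$ is nondecreasing and nonnegative, so $g$ itself is nondecreasing and nonnegative on $[0,1)$. Plugging this back into the exponential bound yields the desired $\exp(-\delta^2/(2np))$, completing the proof.
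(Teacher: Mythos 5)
Your proof is correct: the exponential-moment (Chernoff) calculation, the optimization $t^{\ast}=-\ln(1-\delta/(np))$, the handling of the trivial case $\delta\geq np$, and the convexity argument showing $x+(1-x)\ln(1-x)\geq x^2/2$ on $[0,1)$ all check out, and together they yield exactly the stated bound $\exp\left(-\frac{\delta^2}{2np}\right)$. The paper does not prove this lemma itself but cites it as a standard result (Hoeffding), and your argument is precisely the standard derivation behind that citation, so there is nothing to flag.
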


\section{The \emph{CRE} algorithm} \label{sec-alg}
We now present the three components of the \emph{CRE} algorithm, and prove that they are sound. Recall that each component can either fail or return a result, which is either a Hamilton cycle in the input graph or a declaration that there is none. The \emph{CRE} algorithm itself will be:\\
$CRE(G)$:\\
\indent \emph{If} $CRE1(G)$ \emph{did not fail, return the result of} $CRE1(G)$. \emph{Otherwise:}\\
\indent \emph{If} $CRE2(G)$ \emph{did not fail, return the result of} $CRE2(G)$. \emph{Otherwise:}\\
\indent \emph{Return the result of} $CRE3(G)$.

\subsection{\emph{CRE1}} \label{subalg1}
\noindent We present the algorithm \emph{CRE1}. This algorithm will be a greedy algorithm, tasked with optimizing the expected running time. As such, we aim for it to have the following properties, whenever $p\geq 70 n^{-\frac{1}{2}}$:
\begin{itemize}
	\item $\mathbb{E}\left[ T_{CRE1}(G) \right] =(1+o(1))n / p$;
	\item $Pr[CRE1\mbox{ returns }``failure"] \cdot \mathbb{E}\left[ T_{CRE2}(G) \right] = o(n/p)$.
\end{itemize}
In the algorithm description we will assume that $V(G)=[n]$.\\

\noindent \textbf{The \emph{CRE1} algorithm description:}\\
\begin{itemize}

	\item[\textbf{Step 1.}] Attempt to construct a path $P_1$ in $G\left( [n/2] \right)$ by greedily querying for a neighbour of the current last vertex in the path from outside the path, until the path's end vertex does not have any neighbours among the remaining vertices. If $\frac{n}{2}-|P_1| > \sqrt{n} \log n$, return \emph{``Failure"}. Denote this path by $P_1 = (v_1,...,v_{n/2-n_1})$, with $n_1 = |[n/2]\setminus P_1|$.\\
	Attempt to construct a path $P_2$ in $G\left( [n/2+1,n] \right)$ in the same manner, and return \emph{``Failure"} if $\frac{n}{2}-|P_2| > \sqrt{n} \log n$. Denote $P_2 = (u_1,...,u_{n/2-n_2})$.
	
	\item[\textbf{Step 2.}] Find indices $i,j,k,l$ with minimal $i+j+k+l$, such that $(v_i,u_{n/2-n_2-j}),(v_{n/2-n_1-k},u_l) \in E(G)$. If $i+j+k+l > \sqrt{n} \log n$, return \emph{``Failure"}. Otherwise, denote by $S_0$ the cycle: $$S_0 := P_1(v_i\rightarrow v_{n/2-n_1-k})\cup \{ (v_{n/2-n_1-k},u_l)\} \cup P_2(u_l\rightarrow u_{n/2-n_2-j}) \cup \{(v_i,u_{n/2-n_2-j}) \}.$$
	
	\item[\textbf{Step 3.}] Initialize $i=0$, and repeat the following loop until no vertices are left outside the cycle $S_i$. Choose some vertex $v\notin S_i$. For ease of description we will assume that $v\in [n/2]$. In the complementing case, the description is completely symmetrical, replacing $P_2$ with $P_1$, $n_2$ with $n_1$ and so on.\\
	Create a set $X = \{x_1,...,x_{\sqrt[3]{n}}\}$ of neighbours of $v$ on $(P_2 \cap S_i)\setminus \{ u_{n/2-n_2-j} \}$ that have not been used in this step, with $z:=x_{\sqrt[3]{n}}$ being the maximal one with respect to $P_2$. Return \emph{``failure"} if no such $\sqrt[3]{n}$ vertices exist. Otherwise, create a set $Y=\{y_1,...,y_{\sqrt[3]{n}}\}$ of neighbours of $s_{S_i}( z )$ on $(P_1 \cap S_i) \setminus \{v_i\}$. Return \emph{``failure"} if no such $\sqrt[3]{n}$ vertices exist. Finally, find a pair $x\in X\setminus \{ z \}, y\in Y$ such that $\left( s_{S_i}(x), p_{S_i}(y) \right) \in E(G)$. If no such pair exists, return \emph{``failure"}. Otherwise, set
	\[
	\begin{array}{rl}
	S_{i+1}:= & \{(v,x)\} \cup S_i^{-1}(x\rightarrow y) \cup \{ (y,s(z)) \} \cup S_i(s(z) \rightarrow p(y)) \cup \{ (p(y),s(x)) \}\\
	& \cup S_i(s(x)\rightarrow z ) \cup \{ (z,v) \};
	\end{array}
	\]
	$i:=i+1$.

\end{itemize}

\subsection{\emph{CRE2}} \label{subalg2}

\noindent We present a description of \emph{CRE2}, followed by a proof that the algorithm is sound, that is, if \emph{CRE2} does not fail on a graph $G$ then it returns a Hamilton cycle that is a subgraph of $G$ if and only if $G$ is Hamiltonian.\\

\noindent \textbf{The \emph{CRE2} algorithm description:}\\
\begin{itemize}

\item[\textbf{Step 1.}] Determine $\mathit{SMALL}(G)$ (see Def. \ref{small}) by going over all vertices and checking their degrees in $G$. If the resulting set is larger than $ 2\sqrt{n}$, return \emph{``Failure"}.

\item[\textbf{Step 2.}] Find a \emph{maximum $\leq 2$-matching} $M$ in $G$ from $\mathit{SMALL}(G)$ to $V(G)\setminus \mathit{SMALL}(G)$. Denote by $U$ the subset of vertices in $V(G)\setminus \mathit{SMALL}(G)$ that have degree $1$ in $M$. If $|U| \leq |\mathit{SMALL}(G)|$, add arbitrary vertices to $U$ until it is of size $|\mathit{SMALL}(G)|+1$.

\item[\textbf{Step 3.}] Using the dynamic programming algorithm (\emph{HPA3}, see description in Section \ref{subalg3}), find a Hamilton cycle in the graph with vertex set $U\cup \mathit{SMALL}(G)$ and edge set $E_G\left( U\cup \mathit{SMALL}(G)\right) \cup \left( U \times U \right)$. If no such cycle exists, determine that $G$ is not Hamiltonian. Otherwise, denote this cycle by $C$.\\
Let $\mathit{NE} = (U\times U)\cap C \setminus E(G)$, let $|\mathit{NE}|=r$, and denote the members of $\mathit{NE}$ by $\{e_1,...,e_r\}$.

\item[\textbf{Step 4.}] For each $1 \leq j \leq r$ find a path $P_j$ of length at most $4$ connecting the two vertices of $e_j$, with all of its internal vertices in $G\setminus \left( \bigcup\limits _{k =1}^{j-1} P_k \cup \mathit{SMALL}(G) \cup U \right)$, using \emph{BFS}. If for some $j$ no such path exists, return \emph{``Failure"}. Otherwise, set $i=0$ and denote the resulting cycle by $S_0 = \left( C\cup \bigcup\limits _{j=1}^r P_j \right) \setminus \mathit{NE}$.

\item[\textbf{Step 5.}] Attempt to add at least one vertex of $V(G)\setminus V(S_i)$ to $S_i$ by doing the following:\\
Using BFS, determine all connected components of $G\setminus S_i$, and denote by $V_i$ a largest connected component. If $|S_i|\geq 0.99n$ and $|V_i| \leq 15 \sqrt{n}$, go to \emph{Step 6}. Otherwise, choose an arbitrary orientation to $S_i$ and let $U_i := s(N_G(V_i)\cap S_i)$. If $U_i$ is an independent set, return \emph{``Failure"}. Otherwise, let $(u,w)$ be an edge in $U_i$, let $u^{\prime} = p(u),w^{\prime} = p(w)$ and let $P$ be a path, with all its internal vertices in $V_i$, connecting $u^{\prime}$ to $w^{\prime}$ (this path was uncovered in the BFS stage). Without loss of generality, $u$ precedes $w$ on $S_i$. Set $S_{i+1}$ to be:\\
$$
S_{i+1}= S_i(w\rightarrow u^{\prime}) \cup P \cup S_i^{-1}( w^{\prime} \rightarrow u) \cup \lbrace (u,w) \rbrace .
$$
Set $i=i+1$, and return to \emph{Step 5}.

\item[\textbf{Step 6.}] While there is some vertex $v\in V(G)\setminus V(S_i)$, attempt to add it to $S_i$ by exhaustively searching for two vertices $u,w \in N_G(v)\cap S_i$, a set $E_1 \subseteq E(S_i)$ of size at most 4, and a set $E_2 \subseteq E_G(V(S_i))\setminus E(S_i)$ of size $|E_1|-1$, such that $S_{i+1} :=(S_i\setminus E_1) \cup E_2 \cup \{ (u,v) , (v,w) \}$ is a cycle of size $|S_i|+1$. If no such $u,w,E_1,E_2$ exist, return \emph{``failure"}.
\\

\end{itemize}

\begin{lemma}\label{sound}
If $G$ is a graph such that \emph{CRE2} does not result in failure when applied to $G$, then \emph{CRE2} returns a Hamilton cycle if and only if $G$ is Hamiltonian. Furthermore, if \emph{CRE2} returns a Hamilton cycle then it is a subgraph of $G$. 
\end{lemma}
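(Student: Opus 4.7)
The lemma contains two claims: \emph{(a)} any Hamilton cycle returned by \emph{CRE2} is a subgraph of $G$, and \emph{(b)} assuming \emph{CRE2} does not fail, it returns a Hamilton cycle if and only if $G$ is Hamiltonian. Claim \emph{(a)} I would verify by inspecting Steps 3--6. The cycle $C$ produced by Step 3 lies in the auxiliary graph $G'$ on $X := U \cup \mathit{SMALL}(G)$ with edge set $E_G(X) \cup (U \times U)$, and may use ``fake'' $U \times U$ edges absent from $E(G)$. Step 4 replaces each such fake edge $e_j$ by a BFS-found path $P_j \subseteq G$, yielding $S_0 \subseteq G$. Steps 5 and 6 perform cycle modifications whose newly added edges are drawn either from $S_i$ itself, from BFS-uncovered paths in $G \setminus S_i$ (the path $P$ in Step 5), or from $E_G(V(S_i)) \setminus E(S_i)$ (the set $E_2$ in Step 6). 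Hence each $S_i$, and the final output, remains a subgraph of $G$.

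For \emph{(b)}, \emph{CRE2} can only declare ``$G$ is not Hamiltonian'' inside Step 3, when \emph{HPA3} detects that $G'$ has no Hamilton cycle. Given \emph{(a)} and the assumption that \emph{CRE2} does not fail, both directions of \emph{(b)} reduce to proving that $G$ Hamiltonian implies $G'$ Hamiltonian: the ``only if'' direction of \emph{(b)} is automatic from \emph{(a)}, and the ``if'' direction requires that Step 3 not erroneously declare $G'$ non-Hamiltonian when $G$ does admit a Hamilton cycle. The key consequence of the maximality of $M$ that I use is: for every $v \in \mathit{SMALL}(G)$ of $M$-degree $<2$ and every $G$-neighbor $f \in V(G) \setminus \mathit{SMALL}(G) \setminus U$ of $v$, the set $M \cup \{(v,f)\}$ would be a strictly larger $\leq 2$-matching, contradicting maximality. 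Thus every unsaturated $v \in \mathit{SMALL}(G)$ has all of its $V \setminus \mathit{SMALL}(G)$ $G$-neighbors inside $U$, while every saturated $v$ has two $M$-partners in $U$; in either case $v$ has at least two $G$-neighbors in $X$.

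With this in hand, given a Hamilton cycle $H$ of $G$ I would construct a Hamilton cycle $H'$ of $G'$ by traversing $X$ in the cyclic order induced by $H$ and replacing each maximal run of free vertices (vertices of $V(G) \setminus X$) by a single $G'$-edge connecting the two bounding $X$-vertices. An empty run corresponds to a genuine $E_G(X)$-edge; a non-empty run with both endpoints in $U$ is bridged by the fake $U \times U$ edge. The problematic case is a non-empty run with an endpoint $v \in \mathit{SMALL}(G)$, but then by the observation above $v$ must be saturated in $M$, and one of its $M$-partners in $U$ can be used to reroute the traversal at $v$ via an actual $G$-edge.

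The main obstacle is to ensure global consistency of these local reroutings: different $\mathit{SMALL}$ boundary vertices may compete for the same $M$-partners, and a $U$-vertex used as a substitute neighbor may already be needed elsewhere in the cycle. I plan to manage this by exploiting the fact that $U$ forms a clique in $G'$, which permits arbitrary rearrangement of the cyclic order on $U$, and by casting the insertion of $\mathit{SMALL}$-vertices as a Hall-type feasibility problem; the absence of augmenting paths guaranteed by the maximality of $M$ provides exactly the slack required to resolve all conflicts. An alternative route to the same conclusion is to first delete the $\mathit{SMALL}$-vertices from $H$, stitch the resulting paths on $X \setminus \mathit{SMALL}(G) \subseteq U$ into a cycle using $U$-clique edges, and then reinsert the $\mathit{SMALL}$-vertices one at a time using their two guaranteed $G$-neighbors in $X$.
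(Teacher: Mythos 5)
Your treatment of the easy parts is fine: the subgraph claim does follow by inspecting Steps 3--6, and the reduction of the lemma to ``$G$ Hamiltonian $\Rightarrow$ the auxiliary graph on $U\cup \mathit{SMALL}(G)$ with the clique added on $U$ is Hamiltonian'' is exactly the paper's reduction. Your maximality observation (a vertex of $\mathit{SMALL}(G)$ of $M$-degree $<2$ with a $G$-neighbour in $V(G)\setminus(\mathit{SMALL}(G)\cup U)$ would let $M$ be enlarged) is also correct. But the heart of the lemma is precisely the step you defer: making the local reroutings at $\mathit{SMALL}$-boundary vertices globally consistent. You only assert that this can be ``cast as a Hall-type feasibility problem'' in which ``the absence of augmenting paths provides exactly the slack required''; no such argument is given, and it is not routine. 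A concrete obstruction: an $M$-partner $u\in U$ of a boundary vertex $v$ may already be forced to have degree two inside the path structure inherited from the Hamilton cycle of $G$ (for instance $u$ may lie on that cycle between two vertices of $\mathit{SMALL}(G)$), so using the edge $(v,u)$ breaks the piece through $u$ and creates new boundary vertices; these cascading conflicts are exactly what must be controlled. Your fallback plan --- delete $\mathit{SMALL}(G)$ from the Hamilton cycle, stitch with clique edges on $U$, and reinsert the small vertices one at a time ``using their two guaranteed $G$-neighbours in $X$'' --- does not work as stated: inserting $v$ requires two $X$-neighbours of $v$ that are consecutive on the current cycle (or a genuine rotation argument), and those neighbours may themselves be uninserted small vertices or $U$-vertices already saturated by earlier insertions; chains of small vertices whose only $X$-neighbours are other small vertices cannot be handled one at a time at all.

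The paper closes this gap with a different, extremal argument (due to Thomason): pass to $G^*$, obtained from $G$ by adding all edges inside $V(G)\setminus \mathit{SMALL}(G)$; among Hamilton cycles of $G^*$ choose one, $C$, whose kernel set (the edges of $C$ meeting $\mathit{SMALL}(G)$) contains the maximum number of edges of $M$; then an exchange argument combined with the maximality of the $\leq 2$-matching shows that the set $K$ of attachment points of the kernel paths satisfies $K\subseteq U$, after which the kernel paths together with clique edges on $U$ yield the required Hamilton cycle of the auxiliary graph. To rescue your route you would need an argument of comparable strength (for example, your Hall-type claim established via an explicit augmenting/exchange argument); as written, the global consistency step --- the only nontrivial content of the lemma --- is missing.
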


\begin{proof}
In each step $E(S_i) \subseteq E(G)$ and $S_i\subsetneq S_{i+1}$. So it is clear that if the algorithm returns a Hamilton cycle then it is indeed a Hamilton cycle contained in $G$.\\
The complementing case is \emph{CRE2} declaring that $G$ is not Hamiltonian. This can only occur in Step 3, if the algorithm failed to find a Hamilton cycle in the graph consisting of vertices $\mathit{SMALL}(G)\cup U$ and edges $E_G(\mathit{SMALL}(G) \cup U)\cup (U\times U)$, which we will denote by $H$. Since the dynamic programming algorithm was used to find such a cycle, failure to find one means that it does not exist in $H$, so it remains to be shown that if $G$ is Hamiltonian then $H$ must also be Hamiltonian. We provide a proof of this due to Thomason \cite{THOM}.\\
Let $G^*$ denote the graph obtained by adding to $G$ all the non-edges with both vertices in $G\setminus \mathit{SMALL}(G)$. Assume that $G$ is Hamiltonian. Then $G^*$ must also be Hamiltonian.\\
For some Hamilton cycle $C$, define its \emph{kernel set} to be the edge subset $C\setminus E_{G^*}(V(G)\setminus \mathit{SMALL}(G))$. The kernel set of a Hamilton cycle consists of a set of disjoint paths in $G\setminus E_G(V(G)\setminus \mathit{SMALL}(G))$, containing between them all of $\mathit{SMALL}(G)$, whose endvertices lie in $V(G)\setminus \mathit{SMALL}(G)$.\\
Let $C$ be a Hamilton cycle in $G^*$ such that the number of edges from $M$ contained in its kernel set is maximised. Denote $\mathit{SMALL}(G) = W_0 \cupdot W_1 \cupdot W_2$, where $W_i$ is the subset of $\mathit{SMALL}(G)$ joined by $i$ edges of the kernel set to $V(G)\setminus \mathit{SMALL}(G)$. Let $K\subseteq V(G)\setminus \mathit{SMALL}(G)$ be the set of vertices joined by the kernel set to $\mathit{SMALL}(G)$. Then any vertex in $W_i$ matches to at most $2-i$ vertices in $U\setminus K$, for otherwise if $x\in W_i$ and $(x,y) \in M$, where $y\notin K$, we can remove a kernel set edge from $x$, replace it with $(x,y)$, and create a new kernel set (of another Hamilton cycle $C^{\prime}$) with more edges from $M$ in it. Now, for each vertex in $K$, choose an edge of the kernel set incident to it arbitrarily. Then a vertex of $W_i$ is incident with at most $i$ of these edges. So these edges, along with the edge set $M\cap (\mathit{SMALL}(G) \times (U\setminus K))$, together form a $\leq 2$-matching of order $|U\cup K|$. Since the largest $\leq 2$-matching has order exactly $|U|$, we see that $K\subseteq U$. It now follows from the definition of a kernel set that we can construct a Hamilton cycle in $H$, as claimed.\\
\end{proof}

\subsection{\emph{CRE3}} \label{subalg3}

The final part of \emph{CRE} is \emph{CRE3}, an algorithm with the following desired properties:
\begin{itemize}
	\item The time complexity of \emph{CRE3} is $2^{2n}\cdot n^{O(1)}$;
	\item The space complexity of \emph{CRE3} is linear in $n$;
	\item The result of \emph{CRE3} is either a Hamilton cycle contained in the input graph, or a declaration that the graph is not Hamiltonian if the input graph contains none.
\end{itemize}
Luckily, such an algorithm already exists --- the algorithm \emph{HPA3} presented by Gurevich and Shelah in \cite{GS87}. For completeness we give a brief description of the algorithm. For proof of the properties, see the original paper. We note that, as mentioned in Section \ref{sec-intro}, an algorithm with time complexity $O\left( 2^n\cdot n^2 \right)$ is known. The downside of this algorithm is that it also has exponential space complexity. This is not a very big issue for us, since our interests in this paper lie exclusively in time complexity, but since we can get a similar algorithm, but with linear space, with its time complexity still sufficiently small for our purposes, this is the one we chose.\\
\noindent The algorithm \emph{HPA3}, given a graph $G$ and two vertices $s,t\in V(G)$, finds a Hamilton path in $G$ from $s$ to $t$. First we note that converting this algorithm into an algorithm for finding a Hamilton cycle is very simple: choose an arbitrary vertex in $G$, say $s$, and iterate $HPA3(G\setminus (s,t),s,t)$ over all $t\in N_G(s)$. If for some $t$ a Hamilton $s-t$ path $P$ is found then $P\cup (s,t)$ is a Hamilton cycle in $G$. If all iterations fail, then surely $G$ cannot be Hamiltonian.\\
\noindent \emph{HPA3} is defined recursively, as follows:\\
$HPA3(G,s,t):$\\
\emph{If $V(G)=\{s,t\}$, return $(s,t)$ if it is an edge, and} \textbf{``No such path"} if it is not an edge. \emph{Otherwise:}\\
\emph{For all $c\in V(G)\setminus \{s,t\}$ and for all $A\subseteq V(G)\setminus \{s,t,c\}$ of size $\lfloor \frac{n-3}{2} \rfloor$:} \\
\indent \emph{If $HPA3(A,s,c)$ and $HPA3(G\setminus A,c,t)$ are successful, return $HPA3(A,s,c) \cup HPA3(G\setminus A,c,t)$};\\
\indent \emph{otherwise,  continue.}\\
\emph{If loop failed, return} \textbf{``No such path".}

\section{Expected time complexity of \emph{CRE}} \label{sec-time}

In this section we aim to prove that the algorithm described in Section \ref{sec-alg} meets the time complexity goals we had set, that is: if $p \geq 70n^{-\frac{1}{2}}$, then the expected running time over $G(n,p)$ is $(1+o(1))n/p$. Since
\[
\begin{array}{rcl}
\mathbb{E}\left[ T_{CRE}(G) \right] & \leq & \mathbb{E}\left[ T_{CRE1}(G) \right] + Pr[CRE1\mbox{ fails}]\cdot \mathbb{E}\left[ T_{CRE2}(G)\, |\, CRE1\mbox{ fails} \right] \\
 & & + Pr[CRE2\mbox{ fails}]\cdot \mathbb{E}\left[ T_{CRE3}(G) \right] ,
\end{array}
\]
it is sufficient to prove that the following hold:
\begin{itemize}
	\item $\mathbb{E}\left[ T_{CRE1}(G) \right] = (1+o(1))n/p$;
	\item $Pr[CRE1\mbox{ fails}]\cdot \mathbb{E}\left[ T_{CRE2}(G) \, |\, CRE1\mbox{ fails} \right] = o(n/p)$;
	\item The probability that \emph{CRE2} returns \emph{``failure"} is $2^{-2n} \cdot n^{-\omega (1)}$;
	\item The running time of \emph{CRE3} is $ 2^{2n} \cdot n^{O(1)}$.
\end{itemize}

\noindent A proof of the last point is provided in \cite{GS87}. We now provide proofs for the other three points.\\

\subsection{Expected running time of \emph{CRE1}} \label{subsec-time1}

\begin{lemma}\label{runtime1}
If $p\geq 70n^{-\frac{1}{2}}$, $G\sim G(n,p)$, then $\mathbb{E}\left[ T_{CRE1}(G) \right] = (1+o(1))n/p$.
\end{lemma}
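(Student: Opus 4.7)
\noindent The plan is to decompose $\mathbb{E}[T_{CRE1}(G)]$ by step and show that Step~1 produces the $(1+o(1))n/p$ cost, while Step~2, Step~3, and the contribution from all failure events together add only $o(n/p)$.

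For Step~1, I would analyse the greedy path-growing process directly. When the partial path in $G([n/2])$ has length $k$, there are $m=n/2-k$ unused vertices and the algorithm queries these in some order, halting at the first neighbour or after exhausting all $m$; the expected number of queries at this stage is exactly $\frac{1-(1-p)^m}{p}$. Summing unconditionally over stages gives $\mathbb{E}[T_{\text{Step 1 for }P_1}]\le \sum_{m=1}^{n/2-1}\frac{1-(1-p)^m}{p}$, and I would then split this sum at $m_0=\lceil \log^2 n/p\rceil$. For $m\le m_0$ every summand is at most $1/p$, so the head of the sum is at most $m_0/p=O(n\log^2 n)=o(n/p)$, since $n/p\ge n^{3/2}/70$. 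For $m>m_0$ we have $(1-p)^m\le e^{-\log^2 n}$, so each summand is $(1+o(1))/p$, giving $(1+o(1))\frac{n/2}{p}$. Adding the symmetric contribution from $P_2$, Step~1 costs $(1+o(1))n/p$ in expectation; the matching lower bound of $(1-o(1))n/p$ follows from the same sum by bounding each $m>m_0$ summand below by $(1-o(1))/p$ and the probability of reaching each such stage by $1-o(1)$.

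Next I would show that Steps~2 and~3 are negligible. Each Step~2 pair is an edge independently with probability $p$, so locating both needed edges takes $O(1/p)=o(n/p)$ expected queries. In Step~3, conditional on reaching it we have $n_1,n_2\le \sqrt n\log n$ and $i+j+k+l\le \sqrt n\log n$, so at most $3\sqrt n\log n$ iterations are required. Each iteration performs $O(\sqrt[3]{n}/p)$ geometric-style queries to assemble $X$ and $Y$ from $\Theta(n)$-sized neighbour pools, plus $O((\sqrt[3]{n})^2)=O(n^{2/3})$ pair-queries to find the closing edge. Summing, Step~3 contributes $O(n^{4/3}\log n)=o(n/p)$, again because $n/p\ge n^{3/2}/70$.

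For the failure contributions, I would use the crude unconditional bound $T_{CRE1}=O(n^2)$ and show that every failure event has probability $n^{-\omega(1)}$. A failed Step~1 greedy extension with $m\ge \sqrt n\log n$ vertices remaining has probability $(1-p)^m\le e^{-p\sqrt n\log n}\le n^{-70}$, so a union bound over stages gives $\Pr[\text{Step 1 fails}]\le n^{-69}$. Step~2 failure requires that no edge exist among $\Omega(n\log^2 n)$ independent candidate pairs, which has probability $e^{-\Omega(pn\log^2 n)}=n^{-\omega(1)}$. A Step~3 iteration fails only if a single vertex has fewer than $n^{1/3}$ neighbours in a $\Theta(n)$-sized candidate set, or if $n^{2/3}$ independent candidate pairs contain no edge; Lemma~\ref{chernoff} bounds both by $e^{-\Omega(\sqrt n)}$. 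In every case $\Pr[\text{failure}]\cdot O(n^2)=o(n/p)$. The main obstacle will be obtaining the sharp constant $1+o(1)$ in Step~1 rather than a loose $O(n/p)$; this hinges on the $m\le m_0$ versus $m>m_0$ split so that the bulk regime contributes $(1+o(1))n/p$ while the short-tail and failure regimes are absorbed. The rest is standard Chernoff bookkeeping.
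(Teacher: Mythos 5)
Your decomposition (Step 1 carries the main $(1+o(1))n/p$ cost, Steps 2--3 and failure events are lower order) is the same as the paper's, which simply counts at most $n-2$, $2$, and $\sqrt{n}\log n\cdot(2\sqrt[3]{n}+1)$ successful queries in the three steps, each success costing at most $1/p$ expected queries. However, your quantitative justification contains a concrete error: you invoke ``$n/p\ge n^{3/2}/70$'', which is backwards. From $p\ge 70n^{-1/2}$ one gets $n/p\le n^{3/2}/70$, and since $p$ is allowed to be as large as a constant, $n/p$ can be as small as $\Theta(n)$. Consequently two of your ``$=o(n/p)$'' claims fail on part of the admissible range: $O(n\log^2 n)$ is not $o(n/p)$ when $p=\Theta(1)$, and neither is your Step 3 total $O(n^{4/3}\log n)$ --- indeed the bound your accounting actually produces is $O\bigl(n^{5/6}\log n/p+n^{7/6}\log n\bigr)$, and the $n^{7/6}\log n$ term (coming from charging the closing-edge search at its worst case of $O(n^{2/3})$ pair queries per iteration) genuinely exceeds $n/p$ for large $p$, so that step of the argument would fail as written.

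Both slips are easily repaired. For the head of the Step 1 sum, bound $m_0/p=\log^2 n/p^2=\frac{\log^2 n}{np}\cdot\frac{n}{p}=o(n/p)$, using $np\ge 70\sqrt{n}\gg\log^2 n$ rather than any comparison of $n\log^2 n$ with $n/p$. For Step 3, charge the closing-edge search by its expected cost rather than its worst case: the candidate pairs $(s(x),p(y))$ are previously unqueried, so the search ends after at most $1/p$ queries in expectation, giving per-iteration expected cost $O(\sqrt[3]{n}/p)$ and total $O(\sqrt{n}\log n\cdot\sqrt[3]{n}/p)=o(n/p)$ uniformly in $p\ge 70n^{-1/2}$ --- exactly the paper's bookkeeping. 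With these corrections your argument goes through; the stage-by-stage formula $\frac{1-(1-p)^m}{p}$, the split at $m_0$, the failure accounting, and the explicit matching lower bound are all fine (the last being a detail the paper's own proof leaves implicit).
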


\begin{proof}
The expected running time of \emph{CRE1} is the sum of the expected running times of its three steps.
\begin{itemize}

\item In Step one \emph{CRE1} samples edges, until it reaches at most $n-2$ successes, which means that the expected time of this step is at most $(n-2)/p$;

\item In Step 2 \emph{CRE1} samples edges until it finds two existing edges. So the expected running time of this step is $2/p$;

\item In Step 3 \emph{CRE1} repeats a loop at most $\sqrt{n}\log n$ times. In each time, it samples edges until it finds $2\sqrt[3]{n}+1$ existing ones. So the expected running time of this step is at most $\sqrt{n}\log n \cdot \left( 2\sqrt[3]{n} + 1 \right) /p = o(n/p)$.

\end{itemize}
Overall, we get the desired sum of $(1+o(1))n/p$.

\end{proof}

\subsection{Probability of failure of \emph{CRE1}} \label{subsec-prob1}

\begin{lemma}\label{prob1}
Let $p\geq 70n^{-\frac{1}{2}}$ and let $G\sim G(n,p)$. Then the probability that $CRE1(G)$ returns the result \emph{``failure"} is $o(n^{-60})$.
\end{lemma}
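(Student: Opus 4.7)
The plan is to decompose the failure event of \emph{CRE1} into failures at each of its three steps, bound each probability separately by $o(n^{-60})$, and combine by the union bound. The central underlying fact, which must be verified at every juncture, is that the edges queried at each (sub-)step have not been examined before, so clean binomial / Chernoff tail bounds apply with true independence.

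First I would analyze Step 1. When the greedy has built a path of length $i$ and is at vertex $v_i$, the $m_i := n/2 - i$ edges from $v_i$ to the untouched vertices have never been queried, so conditionally on the history the probability of failing to extend at that step is exactly $(1-p)^{m_i}$. Hence the probability that $P_1$ terminates with $k \geq \sqrt{n}\log n$ untouched vertices is at most
\[
\sum_{k \geq \sqrt{n}\log n} (1-p)^k \;\leq\; \frac{e^{-p\sqrt{n}\log n}}{p} \;\leq\; \frac{\sqrt{n}}{70}\,n^{-70}\;=\;O(n^{-69.5}),
\]
using $p \geq 70 n^{-1/2}$; the same bound applies to $P_2$. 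Thus Step 1 already meets the required tail bound with room to spare.

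For Step 2 I would observe that the cross-edges between the two halves have not been touched by Step 1. Failure requires that among the $\Theta\bigl((\sqrt{n}\log n)^2\bigr)$ cross-pairs indexed by $i+j, k+l \leq \sqrt{n}\log n$, none is an edge of $G$, which occurs with probability at most $(1-p)^{\Theta(n\log^2 n)} = e^{-\Omega(\sqrt{n}\log^2 n)}$, trivially absorbing into $o(n^{-60})$. For Step 3 the loop runs at most $O(\sqrt{n}\log n)$ times, since $|V(G)\setminus V(S_0)| \leq 3\sqrt{n}\log n$. Each iteration has three failure modes: (i) $v$ has fewer than $\sqrt[3]{n}$ neighbours on $(P_2\cap S_i)\setminus\{u_{n/2-n_2-j}\}$; (ii) $s(z)$ has fewer than $\sqrt[3]{n}$ neighbours on $(P_1\cap S_i)\setminus\{v_i\}$; (iii) none of the $n^{2/3}$ candidate pairs $(s(x),p(y))$ is an edge. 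The relevant path intersections have size at least $n/2 - O(\sqrt{n}\log n)$, so the expected degrees in (i) and (ii) are $\geq (1-o(1))\cdot 35\sqrt{n}$, and Lemma~\ref{chernoff} gives failure probabilities $\leq e^{-\Omega(\sqrt{n})}$; meanwhile (iii) is bounded by $(1-p)^{n^{2/3}} \leq e^{-\Omega(n^{1/6})}$. Union-bounding over iterations still yields $o(n^{-60})$.

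The main subtlety, and the only step that calls for genuine care rather than routine calculation, is the independence audit needed to apply these binomial/Chernoff bounds at every iteration of Step 3. At iteration $i+1$ one must argue that the edges from the current external vertex $v$ to $(P_2 \cap S_i)$, the edges from $s(z)$ to $(P_1 \cap S_i)$, and the $n^{2/3}$ pairs examined in sub-step (iii), have not been exposed in any earlier iteration or earlier step (apart from at most $O(1)$ pairs per iteration from the previous rotation, which cost a negligible additive correction). Bookkeeping this overlap --- in particular, that distinct iterations use distinct external $v$'s and that each rotation re-exposes only $O(1)$ cycle edges --- is the heart of the proof; everything else is standard tail estimates and a geometric-series bound.
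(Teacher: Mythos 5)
Your proposal is correct and follows the paper's proof essentially step for step: decompose the failure event by the three steps of \emph{CRE1}, use that each batch of queried pairs is previously unexposed so clean binomial/geometric tail bounds apply, and union-bound over the at most $O(\sqrt{n}\log n)$ iterations of Step 3 (your Step 1 bound via a geometric series over the stopping point is a cosmetic variant of the paper's union bound over the $n/2$ path vertices). The only imprecision is in Step 2: failure does not force \emph{all} cross-pairs with small index sum to be non-edges, only that at least one of the two families --- pairs $(v_i,u_{n/2-n_2-j})$ with $i+j\le \tfrac{1}{2}\sqrt{n}\log n$, or pairs $(v_{n/2-n_1-k},u_l)$ with $k+l\le \tfrac{1}{2}\sqrt{n}\log n$ --- is entirely edge-free, which is how the paper argues and which yields the same $(1-p)^{\Theta(n\log^2 n)}$ bound.
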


\begin{proof}
We note that since no edge is sampled twice during the run of \emph{CRE1}, all the possible events that lead to failure are independent. We bound from above the probability of each of these events occurring.
\begin{enumerate}
	\item \emph{CRE1} fails if at some point in Step 1 the last vertex in $P_1$ has no neighbours in the set $[n/2]\setminus P_1$, and if at that point this set is larger than $\sqrt{n}\log n$. The probability of this occurring is at most the probability that among $\frac{n}{2}$ independent random variables distributed $Bin\left( \sqrt{n}\log n,p \right)$ at least one is equal to zero. We bound this probability by applying the union bound:
	\[
		\begin{array}{rcl}
		Pr[n_1 \geq \sqrt{n}\log n] & \leq & 0.5n\cdot (1-p)^{\sqrt{n}\log n} \\
		& \leq & 0.5n\exp (-70\log n) \\
		& = & o(n^{-60}).
		\end{array}
	\]
	
	\item $$Pr[n_2 \geq \sqrt{n}\log n] = Pr[n_1 \geq \sqrt{n}\log n] = o(n^{-60}).$$
	
	\item Step 2 results in failure if the minimal indices $i,j,k,l$ for which $\left(v_i,u_{n/2-n_2-j} \right) ,\left(v_{n/2-n_1-k},u_l \right)$ are in $E(G)$ satisfy $i+j+k+l > \sqrt{n}\log n$, and in particular $i+j> 0.5 \sqrt{n}\log n$ or $k+l> 0.5 \sqrt{n}\log n$. There are $\binom{0.5 \sqrt{n}\log n}{2} \geq 0.1n\log ^2n$ pairs $i,j$ (or $k,l$) with $i+j \leq 0.5 \sqrt{n}\log n$, for which an edge query resulted in failure. Applying the union bound we get
	\[
		\begin{array}{rcl}
		Pr[i+j+k+l > \sqrt{n}\log n] & \leq & 2Pr[i+j > 0.5\sqrt{n}\log n] \\
		& \leq & (1-p)^{0.1n\log ^2 n} = n^{-\omega (1)}.
		\end{array}
	\]
	
	\item If Step 3 resulted in failure, say in the $m$'th iteration, then there was some vertex $v$ outside of $S_m$ such that one of the following happened:
	\begin{enumerate}
	\item $v$ did not have $\sqrt[3]{n}$ neighbours in (wlog) $(P_2 \cap S_m)\setminus \{ u_{n/2-n_2-j} \}$ that have not been used in iterations 0 to $i-1$;
	\item $s_{S_m}(z)$ did not have $\sqrt[3]{n}$ neighbours in $(P_1 \cap S_m)\setminus \{ v_i \}$;
	\item $s(X)$ and $p(Y)$ did not have any edge between them.
	\end{enumerate}
	
	Since up to the $m$'th iteration, at most $\sqrt{n}\log n \cdot \sqrt[3]{n} = o(n)$ vertices of $S_m$ have been used, the probability of \emph{(a)} and \emph{(b)} is at most the probability that $Bin(n/6,p)<\sqrt[3]{n}$. So:
	\[
		\begin{array}{rcl}
		Pr[\mbox{Step 3 failed}] & \leq & n\cdot \left( 2\cdot Pr\left[ Bin(n/6,p)<\sqrt[3]{n} \right] + (1-p)^{\sqrt[3]{n}(\sqrt[3]{n}-1)} \right) \\
		& \leq & n\cdot \left( \exp \left( -\Omega (\sqrt{n}) \right) + \exp \left( -\Omega (\sqrt[6]{n}) \right) \right) = n^{-\omega (1)}.
		\end{array}
	\]
	\end{enumerate}
	So all of the events that lead to failure have probability $o(n^{-60})$, and therefore the probability of failure is also $o(n^{-60})$, as we have set out to prove.

\end{proof}

\subsection{Expected running time of \emph{CRE2}} \label{subsec-time2}

\begin{lemma}\label{runtime2}
Let $p=p(n)\geq 70n^{-\frac{1}{2}}$. Then $Pr[CRE1\mbox{ fails}]\cdot \mathbb{E}\left[ T_{CRE2}(G)\, |\, CRE1\mbox{ fails} \right] = O(1)$, where the input to both algorithms is distributed according to $G\sim G(n,p)$.
\end{lemma}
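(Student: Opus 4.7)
The plan is to rewrite the target as $\mathbb{E}[T_{CRE2}(G)\cdot\mathbf{1}_{CRE1\text{ fails}}]$ and decompose it over a high-probability event $A$ controlling $|\mathit{SMALL}(G)|$. Lemma \ref{prob1} already gives $Pr[CRE1\text{ fails}]=o(n^{-60})$, so it suffices to show that on $A$ the running time of \emph{CRE2} is bounded by a fixed polynomial $n^{C_1}$ with $C_1<60$, while on $A^c$ a uniform deterministic bound on $T_{CRE2}(G)$ absorbs the small residual probability.

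First I would bound $T_{CRE2}(G)$ in terms of $|\mathit{SMALL}(G)|$. All steps other than Step~3 are $n^{O(1)}$: Step~1 merely reads off degrees; Step~2 is a max-flow computation; Step~4 runs BFS at most $|V(H)|$ times; each iteration of Step~5 extends $S_i$ by at least one vertex, so there are at most $n$ such iterations, each polynomial; and Step~6 loops over at most $0.01n$ outside vertices (it is entered only when $|S_i|\ge 0.99n$) and, for each, performs an $n^{O(1)}$ exhaustive search, the key point being that $|E_1|\le 4$ and $|E_2|\le 3$ are bounded by constants. The only potentially expensive step is Step~3, where \emph{HPA3} runs on $H$ with $|V(H)|\le 3|\mathit{SMALL}(G)|+1$, in time $2^{O(|\mathit{SMALL}(G)|)}\cdot n^{O(1)}$. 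Combined with the fact that \emph{CRE2} aborts already at its own Step~1 whenever $|\mathit{SMALL}(G)|>2\sqrt{n}$, this gives the uniform deterministic bound $T_{CRE2}(G)\le 2^{O(\sqrt{n})}$.

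Second, I would obtain a tail bound on $|\mathit{SMALL}(G)|$. For any fixed $T\subseteq V(G)$ of size $k$, the restricted degrees $d'_v:=|N_G(v)\cap(V\setminus T)|$, $v\in T$, are independent copies of $\mathrm{Bin}(n-k,p)$ and satisfy $d'_v\le d_G(v)$. Lemma \ref{chernoff}, together with $p\ge 70n^{-1/2}$, gives $Pr[d'_v<40\sqrt{n}]\le\exp(-\alpha\sqrt{n})$ for an absolute $\alpha>0$ (the mean $(n-k)p$ exceeds $40\sqrt{n}$ by $\Omega(\sqrt{n})$). A union bound over $T$ then gives
\[
Pr\bigl[|\mathit{SMALL}(G)|\ge k\bigr]\le\binom{n}{k}\exp(-\alpha k\sqrt{n}).
\]
For $k=C_0\log n$ with a suitably small constant $C_0$, this is $\exp(-\Omega(\sqrt{n}\log n))$. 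Setting $A:=\{|\mathit{SMALL}(G)|\le C_0\log n\}$, I obtain $Pr[A^c]=\exp(-\Omega(\sqrt{n}\log n))$ and, on $A$, $T_{CRE2}(G)\le n^{C_1}$ for a constant $C_1=O(C_0)$.

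Choosing $C_0$ small enough so that $C_1\le 50$ (say), the decomposition then yields
\[
\mathbb{E}\bigl[T_{CRE2}(G)\mathbf{1}_{CRE1\text{ fails}}\bigr]\le n^{C_1}\cdot Pr[CRE1\text{ fails}]+2^{O(\sqrt{n})}\cdot Pr[A^c]=o(1)=O(1),
\]
using Lemma \ref{prob1} for the first term and the tail bound above for the second. The main obstacle will be the polynomial-time verification of Steps~5 and~6 of \emph{CRE2}; for Step~6 this hinges on $|E_1|,|E_2|$ being bounded by constants, and for Step~5 on each iteration making deterministic progress on $|S_i|$. Everything else is a routine combination of Chernoff concentration for $|\mathit{SMALL}(G)|$ with simple arithmetic balancing of the constants $C_0$ and $C_1$ against the exponent $60$ from Lemma \ref{prob1}.
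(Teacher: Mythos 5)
Your proof is correct, but it packages the argument differently from the paper. The paper also reduces everything to the size of $\mathit{SMALL}(G)$ (all steps of \emph{CRE2} other than Step~3 are a fixed polynomial, and Step~3 costs $2^{O(|\mathit{SMALL}(G)|)}n^{O(1)}$ with $|U\cup \mathit{SMALL}(G)|\le 3|\mathit{SMALL}(G)|$), but it handles the conditioning by the crude inequality $Pr[\,\cdot \mid CRE1\mbox{ fails}]\le p_1^{-1}Pr[\,\cdot\,]$ and then shows that the full sum $\sum_{k\le 2\sqrt n}k^{O(1)}2^{6k}Pr[|\mathit{SMALL}(G)|=k]$ is $o(1)$, using a Chernoff bound on the edge count $e(T,V\setminus T)\sim Bin(k(n-k),p)$ for a candidate set $T$ of $k$ small vertices; the factor $p_1$ then cancels the $p_1^{-1}$, so for the exponential Step~3 the paper never needs the quantitative strength of Lemma~\ref{prob1} (it only needs $p_1=O(n^{-5})$ to absorb the polynomial steps). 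You instead truncate at the single threshold $|\mathit{SMALL}(G)|\le C_0\log n$, pay $n^{C_1}\cdot Pr[CRE1\mbox{ fails}]$ on that event and $2^{O(\sqrt n)}\cdot e^{-\Omega(\sqrt n\log n)}$ off it, with the tail bound proved via independent restricted degrees $d'_v\sim Bin(n-k,p)$ rather than the aggregate edge count --- both concentration arguments are valid. The trade-off is that your route leans on the specific exponent $60$ from Lemma~\ref{prob1} and requires balancing $C_0$ and $C_1$ against it (so it is a little more fragile if the failure-probability bound for \emph{CRE1} were weakened to, say, $n^{-10}$), whereas the paper's summation-over-$k$ version is insensitive to how small $p_1$ is for the Step~3 contribution; on the other hand your two-event split is arguably more transparent and makes the worst-case bound $T_{CRE2}\le 2^{O(\sqrt n)}$ (forced by the abort in Step~1 of \emph{CRE2}) explicit. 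Your accounting of the polynomial steps, including the constant-size sets $E_1,E_2$ in Step~6 and the strictly increasing $|S_i|$ in Step~5, matches what the paper asserts wholesale as $n^{O(1)}$.
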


\begin{proof}
Denote $Pr[CRE1\mbox{ fails}] := p_1$. Except for Step 3, all steps of \emph{CRE2} have time complexity at most $O(n^5)$, regardless of the input graph. As for Step 3, since $|U\cup \mathit{SMALL}(G)|\leq 3|\mathit{SMALL}(G)|$, the expected runtime of this step (assuming we reach it) is 
$$
\begin{array}{rcl}
\mathbb{E}\left[ T_{Step\ 3}(G)\, |\, CRE1\mbox{ fails} \right] & = &\sum _{k=1}^{2\sqrt{n}} k^{O(1)}2^{6k}\cdot Pr\left[ |\mathit{SMALL}(G)| = k\, | \, CRE1\mbox{ fails}\right]\\
& \leq & {p_1}^{-1} \cdot  \sum _{k=1}^{2\sqrt{n}} k^{O(1)}2^{6k}\cdot Pr\left[ |\mathit{SMALL}(G)| = k\right].
\end{array}
$$ We bound each term from above, using the Chernoff bound (Lemma \ref{chernoff})

\[
\begin{array}{rcl}
k^{O(1)}2^{6k}\cdot Pr\left[ |\mathit{SMALL}(G)| = k\right] & \leq & k^{O(1)}2^{6k} \cdot \binom{n}{k} \cdot Pr\left[ Bin(k(n-k),p) \leq \frac{3}{4} knp \right]  \\
& \leq & \exp \left( O(\log k) + 6k + k\log n - \Omega (knp) \right) = o\left( n^{-1} \right),
\end{array}
\]

\noindent hence the value of the entire sum above is at most $o(1)$.

\noindent So overall
$$
Pr[CRE1\mbox{ fails}]\cdot \mathbb{E}\left[ T_{CRE2}(G)\, |\, CRE1\mbox{ fails} \right] = p_1 \cdot O\left(  n^5 + {p_1}^{-1} \right) = O(1).
$$

\end{proof}

\subsection{Probability of failure of \emph{CRE2}} \label{subsec-prob2}

\noindent Let $G\sim G(n,p)$, where $p = p(n) \geq 70n^{-\frac{1}{2}}$.\\
We will call an event $A$ \emph{rare} if $Pr[A] = 2^{-2n} \cdot n^{-\omega (1)}$. Our goal is to prove that $CRE2(G)$ resulting in \emph{failure} is a rare event. We aim to do this by presenting a graph property $(P)$ such that:
\begin{itemize}
\item $G \notin (P)$ is rare;
\item If $G \in (P)$ then \emph{CRE2} deterministically either finds a Hamilton cycle or determines that the graph is not Hamiltonian.
\end{itemize}

Define the graph property $(P)$ as follows:
$$\forall U,W \subseteq V(G)\ disjoint\ subsets \,: e(U,W)> |U|\cdot |W| \cdot p\left( 1-\sqrt{\frac{n^{1.5}}{10|U|\cdot |W|}} \right) .$$
(In particular, if $|U|\cdot|W| \geq \frac{n^{1.5}}{10}$ then $e(U,W) \geq 1.)$\\

\begin{lemma}\label{prob2}
If  $p = p(n) \geq 70n^{-\frac{1}{2}}$ and $G\sim G(n,p)$, then $G \notin (P)$ is rare.
\end{lemma}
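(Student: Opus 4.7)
The plan is a routine Chernoff plus union bound. First I would note that the defining inequality of $(P)$ is vacuous whenever $|U|\cdot|W| < n^{1.5}/10$: in that range the factor $1-\sqrt{n^{1.5}/(10|U|\cdot|W|)}$ is negative while $e(U,W)\geq 0$, so the inequality holds automatically. Hence it suffices to bound the probability that there exist disjoint nonempty $U,W\subseteq V(G)$ with $uw\geq n^{1.5}/10$ (writing $u=|U|$, $w=|W|$) satisfying
$$
e(U,W)\leq uwp\left(1-\sqrt{\tfrac{n^{1.5}}{10uw}}\right).
$$

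Fix such a pair. Since $U$ and $W$ are disjoint, $e(U,W)\sim\mathrm{Bin}(uw,p)$. Setting $\delta:=p\sqrt{uw\cdot n^{1.5}/10}$ gives $uwp-\delta=uwp\bigl(1-\sqrt{n^{1.5}/(10uw)}\bigr)$, so by Lemma \ref{chernoff},
$$
\Pr\bigl[e(U,W)\leq uwp-\delta\bigr]\leq\exp\!\left(-\frac{\delta^2}{2uwp}\right)=\exp\!\left(-\frac{pn^{1.5}}{20}\right)\leq\exp(-3.5\,n),
$$
where the last step uses $p\geq 70n^{-1/2}$. Next I would take a union bound over all ordered pairs of disjoint subsets $(U,W)$: each vertex of $V(G)$ is in $U$, in $W$, or in neither, so there are at most $3^n$ such pairs. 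This yields
$$
\Pr[G\notin(P)]\leq 3^n\cdot\exp(-3.5\,n)\leq\exp(-2.4\,n),
$$
which is $2^{-2n}\cdot n^{-\omega(1)}$ since $\exp(-2.4n)\cdot 2^{2n}=\exp\bigl(-(2.4-2\ln 2)n\bigr)=e^{-\Omega(n)}$ decays faster than $n^{-c}$ for every constant $c$.

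There is no substantive obstacle here. The only point worth highlighting is that the correction term $\sqrt{n^{1.5}/(10uw)}$ in the definition of $(P)$ has been tuned precisely so that the Chernoff exponent $\delta^2/(2uwp)$ simplifies to $pn^{1.5}/20$, independent of $u$ and $w$; thanks to the constant $70$ in the assumption $p\geq 70n^{-1/2}$, this comfortably beats the $n\ln 3$ loss from the union bound, leaving the exponential margin that absorbs the $n^{-\omega(1)}$ required by rarity.
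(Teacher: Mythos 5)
Your proposal is correct and follows essentially the same route as the paper: restrict to disjoint pairs with $|U|\cdot|W|\geq n^{1.5}/10$ (the inequality being vacuous otherwise), apply the Chernoff bound with the deviation chosen so the exponent becomes $pn^{1.5}/20\geq 3.5n$, and finish with a union bound over at most $3^n$ pairs, giving $3^n e^{-3.5n}=2^{-2n}n^{-\omega(1)}$. Your explicit handling of the vacuous range and the check that $e^{-2.4n}$ beats $2^{-2n}$ are just slightly more detailed versions of what the paper leaves implicit.
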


\begin{proof}
We bound from above the probability that $G \notin (P)$.\\
Let $U,W \subseteq V(G)$ be two disjoint sets, and assume that $|U|\cdot|W|\geq \frac{n^{1.5}}{10}$. By the Chernoff bound (Lemma \ref{chernoff}), the probability of $e(U,W) \leq |U|\cdot |W| \cdot p\left( 1-\sqrt{\frac{n^{1.5}}{10|U|\cdot |W|}} \right)$ is at most

		$$
		Pr\left[ Bin\left( |U|\cdot |W|,p\right) \leq |U|\cdot |W| \cdot p \left( 1-\sqrt{\frac{n^{1.5}}{10|U|\cdot |W|}} \right) \right] \leq \exp \left(- \frac{1}{20} \cdot n^{1.5}p \right) \leq e^{-3.5n}.\\
		$$
Finally, by the union bound we get that the probability that exist such $U,W$ is at most $3^n\cdot e^{-3.5n} = 2^{-2n} \cdot n^{-\omega (1)}$, as desired.
\end{proof}

\noindent In order to prove that \emph{CRE2} does not result in \emph{``failure"} on an input graph $G$ satisfying $(P)$ for $p=p(n)\geq 70n^{-\frac{1}{2}}$, we will show that none of the four stages that may result in \emph{``failure"} does so on such an input.\\
In the following lemmas it is assumed, without stating explicitly, that $p(n)\geq 70n^{-\frac{1}{2}}$.

\begin{lemma}\label{failure1}
Let $G$ be a graph on $n$ vertices satisfying $(P)$. Then \emph{Step 1} does not return \emph{``Failure"} on input $G$.
\end{lemma}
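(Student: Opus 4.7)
The plan is to argue by contradiction: assume Step 1 returns \emph{``Failure"} on $G$, which means $|\mathit{SMALL}(G)| > 2\sqrt{n}$, and derive a violation of property $(P)$. The natural test sets to feed into $(P)$ are $U \subseteq \mathit{SMALL}(G)$ with $|U| = 2\sqrt{n}$ and $W := V(G)\setminus U$, so $|W| = n - 2\sqrt{n}$.

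The upper bound on $e(U,W)$ comes for free from the definition of $\mathit{SMALL}$: every $v\in U$ has $d_G(v) < 40\sqrt{n}$, and each edge in $E(U,W)$ contributes to the degree of exactly one vertex in $U$, so
\[
e(U,W) \;\leq\; \sum_{v\in U} d_G(v) \;<\; 2\sqrt{n}\cdot 40\sqrt{n} \;=\; 80n.
\]
On the other hand, the lower bound from $(P)$ applied to this pair gives roughly
\[
e(U,W) \;>\; |U|\cdot|W|\cdot p\left( 1 - \sqrt{\tfrac{n^{1.5}}{10|U|\cdot|W|}}\right),
\]
and I would plug in $|U|\cdot|W| = 2\sqrt{n}(n-2\sqrt{n}) = (2+o(1))n^{1.5}$ together with $p\geq 70 n^{-1/2}$. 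The product $|U|\cdot|W|\cdot p$ is then at least $(140+o(1))n$, while the correction factor is $1 - \sqrt{1/20 + o(1)} \geq 0.77 + o(1)$, yielding $e(U,W) > (107 + o(1))n$, which is incompatible with the $< 80n$ upper bound for $n$ large enough.

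The argument is essentially just a single application of $(P)$ to the right pair $(U,W)$, so there is no real obstacle; the only thing to be slightly careful about is verifying that $140\cdot 0.77 > 80$ with enough slack to absorb the $o(1)$ term and to justify using $p\geq 70 n^{-1/2}$ (rather than a larger $p$). Since $107 > 80$ with substantial room, the constants $40$ in the definition of $\mathit{SMALL}$ and $70$ in the assumption on $p$ are comfortably sufficient, and the lemma follows.
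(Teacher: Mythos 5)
Your proposal is correct and follows essentially the same route as the paper: take a subset of $\mathit{SMALL}(G)$ of size $2\sqrt{n}$ together with its complement, bound the number of edges between them by $80n$ via the degree bound, and note that property $(P)$ forces strictly more than roughly $100n$ edges (the paper computes $(1-1/\sqrt{19})\cdot 1.9n^{1.5}\cdot 70n^{-1/2} \geq 80n$ with explicit constants rather than $o(1)$ bookkeeping), giving the same contradiction.
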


\begin{proof}
\emph{CRE2} fails this step if and only if $|\mathit{SMALL}(G)| \geq 2\sqrt{n}$. Let $A \subseteq \mathit{SMALL}(G)$ be some subset of size $2\sqrt{n}$. So $A$ and $V(G)\setminus A$ are two disjoint subsets with $|A|\cdot |V(G)\setminus A|\geq 1.9n^{1.5}$, but
$$
e\left( A,V(G)\setminus A \right) \leq 40\sqrt{n}|A| \leq \left( 1-\frac{1}{\sqrt{19}} \right) \cdot |A|\cdot |V(G)\setminus A|\cdot p,
$$
a contradiction to $G$ satisfying $(P)$.
\end{proof}

\begin{lemma}\label{failure4}
Let $G$ be a graph on $n$ vertices satisfying $(P)$. Then \emph{Step 4} does not return \emph{``Failure"} on input $G$.
\end{lemma}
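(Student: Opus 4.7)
The plan is to show that for every iteration $j$ of Step 4, a suitable path $P_j$ of length at most $4$ exists in $G$, by a neighborhood-expansion argument powered by property $(P)$. Let $a, b$ denote the two endpoints of $e_j$ (both lying in $U$), and set the forbidden vertex set $F := \mathit{SMALL}(G) \cup U \cup \bigcup_{k < j} P_k$. The first step is a routine size bound: $|\mathit{SMALL}(G)| \le 2\sqrt n$ by Lemma~\ref{failure1}, $|U| \le 2\sqrt n + 1$ by construction, and $|\mathit{NE}| \le |U|$ (since the edges of $\mathit{NE}$ lie within $U\times U$ on the cycle $C$, each $U$-vertex has cycle-degree $2$, so $2|\mathit{NE}| \le 2|U|$). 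With at most $3$ internal vertices per previously constructed $P_k$, this yields $|F| \le 10\sqrt n + O(1)$, which I will bound by $11\sqrt n$ for $n$ large.

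Now let $A_1 := N_G(a) \setminus F$ and $B_1 := N_G(b) \setminus F$. Since $a, b \notin \mathit{SMALL}(G)$, both have degree at least $40\sqrt n$ in $G$, so $|A_1|, |B_1| \ge 40\sqrt n - |F| \ge 29\sqrt n$. Two easy subcases dispose of the short paths directly: if $A_1 \cap B_1 \ne \emptyset$, any common neighbor $y$ gives the length-$2$ path $a-y-b$; otherwise, if $|A_1|\cdot|B_1| \ge n^{1.5}/10$, property $(P)$ applied to the disjoint pair $(A_1, B_1)$ forces some edge $(x,z) \in E(G)$ with $x \in A_1$, $z \in B_1$, yielding the length-$3$ path $a-x-z-b$.

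The remaining case is the one requiring care, and is the main obstacle: $A_1 \cap B_1 = \emptyset$ together with $|A_1|\cdot|B_1| < n^{1.5}/10$, which combined with $|A_1|, |B_1| \ge 29\sqrt n$ forces both $|A_1|, |B_1| < n/290$. Here I pass to the second-layer neighborhoods $A_2 := N_G(A_1) \setminus F$ and $B_2 := N_G(B_1) \setminus F$, aiming for a length-$4$ path through a vertex of $A_2 \cap B_2$. The heart of the argument is to show $|A_2| > n/2$: setting $W_a := V(G) \setminus (F \cup A_1 \cup A_2)$, by construction $W_a$ has no edge to $A_1$, so property $(P)$ applied to the disjoint pair $(A_1, W_a)$ forces $|A_1|\cdot|W_a| < n^{1.5}/10$, giving $|W_a| < n/290$ and hence $|A_2| = n - |F| - |A_1| - |W_a| > 0.99n$. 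Symmetrically $|B_2| > 0.99n$, so $|A_2 \cap B_2| > 0.98n$ by inclusion-exclusion.

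Picking any $y \in A_2 \cap B_2$ together with witnesses $x \in A_1$, $z \in B_1$ such that $(x,y), (y,z) \in E(G)$ yields the desired path $a - x - y - z - b$ of length $4$. The three internal vertices lie outside $F$ by construction; they are pairwise distinct because $A_1 \cap B_1 = \emptyset$ in this case and $A_2, B_2$ are \emph{external} neighborhoods of $A_1, B_1$, which forces $y \notin A_1 \cup B_1$. Thus the BFS in Step 4 must succeed and cannot return ``Failure'' on $G$.
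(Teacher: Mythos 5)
Your proof is correct, and it runs on the same engine as the paper's: the consequence of $(P)$ that any two disjoint vertex sets whose size product is at least $n^{1.5}/10$ must span an edge, applied to neighbourhoods of the two endpoints of $e_j$ inside the graph obtained by deleting $\mathit{SMALL}(G)\cup U$ and the previously built paths. The difference is only in execution: the paper argues by contradiction in one shot --- if no path of length at most $4$ exists, the distance-$2$ balls around the two endpoints are disjoint, so one of them has size at most $n/2$, and then $(P)$ is violated by the pair consisting of that endpoint's neighbourhood (of size at least $25\sqrt n$) and everything outside its distance-$2$ ball --- whereas you build the path constructively, splitting into cases by its length and applying $(P)$ again to show that both second neighbourhoods exceed $n/2$ and hence intersect; your version is longer but exhibits the path explicitly, the paper's is shorter but purely a contradiction. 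One small inaccuracy in your bookkeeping: $|U|\leq 2\sqrt n+1$ is not what the construction gives, since $U$ consists of all vertices covered by the maximum $\leq 2$-matching (the padding to size $|\mathit{SMALL}(G)|+1$ happens only when $|U|\leq|\mathit{SMALL}(G)|$), so one can only say $|U|\leq \max\{2|\mathit{SMALL}(G)|,\ |\mathit{SMALL}(G)|+1\}\leq 4\sqrt n$, which gives $|F|\leq 18\sqrt n$ rather than $11\sqrt n$. This is harmless because your constants have ample slack: with $|F|\leq 18\sqrt n$ you still get $|A_1|,|B_1|\geq 22\sqrt n$, hence $|A_1|,|B_1|,|W_a|,|W_b|<n/220$ in your last case, and $|A_2|,|B_2|>n/2$ with room to spare, so the conclusion stands.
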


\begin{proof}
 Say we failed to find a path of length at most $4$ between the vertices of some non-edge $e_i := (u_1,u_2)\in U\times U$ in the graph $H_i := G\setminus \left( \bigcup\limits _{j =1}^{i-1} P_j \cup \mathit{SMALL}(G) \cup U \right)$. Since $u_1, u_2\notin \mathit{SMALL}(G)$, it holds that
$$
|N_{H_i}(u_1)|,|N_{H_i}(u_2)| \geq 40\sqrt{n}-6\cdot |\mathit{SMALL}(G)| \geq 25\sqrt{n}.
$$
Let $D_2(G,v)$ denote the set of vertices in a graph $G$ of distance at most 2 from a vertex $v$. Because there is no path of length at most 4, the sets $D_2(H_i,u_1), D_2(H_i, u_2)$ do not intersect each other, which means that one of them, WLOG $D_2(H_i,u_1)$, is of size at most $\frac{1}{2}n$. But then we have
$$
|N_{H_i}(u_1)|\cdot |H_i\setminus \left( D_2(H_i,u_1) \cup \{u_1\} \right) | \geq 25\sqrt{n} \cdot \left( n-12\sqrt{n}-\frac{1}{2}n -1 \right) \geq 10n^{1.5},
$$
$$
e\left( N_{H_i}(u_1),H_i\setminus \left(N_{H_i}(u_1) \cup D_2(H_i,u_1) \cup \{u_1\}  \right) \right) =0,
$$
which means $G \notin (P)$, a contradiction.
\end{proof}

\begin{lemma}\label{failure5}
Let $G$ be a graph on $n$ vertices satisfying \emph{(P)}. Then \emph{Step 5} does not return \emph{``Failure"} on input $G$.
\end{lemma}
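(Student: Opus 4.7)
The plan hinges on the observation that $U_i$ contains an edge whenever $U_i\cap N\ne\emptyset$, where $N:=N_G(V_i)\cap S_i$: indeed, any $y\in N\cap s(N)$ satisfies $y,s(y)\in U_i=s(N)$, and the cycle-edge $(y,s(y))\in E(S_i)\subseteq E(G)$ is then an edge inside $U_i$. The proof will verify $U_i\cap N\ne\emptyset$ in every iteration that does not route the algorithm to Step~6.

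First I would show that being in the ``otherwise'' branch forces $|V_i|>15\sqrt{n}$: otherwise $|S_i|<0.99n$ (since the algorithm did not jump to Step~6), and greedily bundling the components of $G\setminus S_i$ (each of size at most $|V_i|\le 15\sqrt{n}$) into two roughly balanced unions $A,B$ produces $|A|,|B|\ge(0.01n-15\sqrt{n})/2\ge 0.004n$, giving $|A|\cdot|B|\ge n^{1.5}/10$ with $e(A,B)=0$, contradicting $(P)$. Next, applying $(P)$ to the disjoint pair $(V_i,S_i\setminus N)$, whose cross-edge count is zero by the definition of $N$, forces $|V_i|\cdot|S_i\setminus N|<n^{1.5}/10$, hence $|S_i\setminus N|<n/150$ and $|U_i|=|N|\ge|S_i|-n/150$.

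I then split on the size of $|S_i|$. If $|S_i|>n/75$, then $|N|>|S_i|/2$, exceeding the independence number $\lfloor|S_i|/2\rfloor$ of the cycle $S_i$, so two consecutive vertices of $S_i$ lie in $N$, directly providing a $y\in U_i\cap N$. If instead $|S_i|\le n/75$, the same $(P)$-partition argument applied to the component structure of $G\setminus S_i$ yields $|V_i|\ge(1-o(1))n$, and then $(P)$ applied to the disjoint pair $(U_i,V_i)$ forces $e(U_i,V_i)\ge 1$ provided $|U_i|\ge\sqrt{n}/10$; the $U_i$-endpoint of such an edge automatically belongs to $N$, as required.

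The main obstacle is the residual very-small regime $|S_i|=O(\sqrt{n})$, where the $(P)$-derived bound on $|N|$ can fall short of $\sqrt{n}/10$. To cover it I would combine $(P)$ with the minimum-degree consequence of Lemma~\ref{failure1}: since $|\mathit{SMALL}(G)|<2\sqrt{n}$ and in this regime $|S_i|+|R|\ll 40\sqrt{n}$, every $x\in S_i\setminus\mathit{SMALL}(G)$ must have a neighbour in $V_i$, yielding the sharper bound $|N|\ge|S_i|-2\sqrt{n}$. This already exceeds $|S_i|/2$ as soon as $|S_i|>4\sqrt{n}$, triggering the consecutive-pair argument above; closing the tiny residual range $|S_i|\le 4\sqrt{n}$ will require careful tracking of constants, exploiting the slack in $p\ge 70n^{-1/2}$, $|V_i|\ge(1-o(1))n$, and $|\mathit{SMALL}(G)|<2\sqrt{n}$ simultaneously to push the $(P)$-estimate on $e(U_i,V_i)$ across the threshold.
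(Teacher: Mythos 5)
Your plan is close in spirit to the paper's case analysis (the component-bundling argument when $|V_i|\le 15\sqrt{n}$ and $|S_i|<0.99n$, the successor/consecutive-pair trick, and the $(P)$-applications to pairs with no edges between them all appear there too), and those parts are correct. But the ``tiny residual range'' you flag at the end is a genuine gap, not a matter of tracking constants. When $|S_i|=O(\sqrt{n})$ (which is in fact the typical situation, since $S_0$ is built in Step 4 from the small cycle $C$ on $U\cup \mathit{SMALL}(G)$ plus short connecting paths, and can have only a handful of vertices when $\mathit{SMALL}(G)$ is small), property $(P)$ is simply silent: it gives no conclusion for any pair of sets whose size product is below $n^{1.5}/10$, so no amount of slack in $p\ge 70n^{-1/2}$ or in $|V_i|\ge(1-o(1))n$ can force $e(U_i,V_i)\ge 1$ once $|U_i|\le|S_i|=o(\sqrt{n})$. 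Likewise your degree-based bound $|N|\ge |S_i|-2\sqrt{n}$, which uses only the absolute estimate $|\mathit{SMALL}(G)|<2\sqrt{n}$, is vacuous when $|S_i|\le 2\sqrt{n}$; with only these tools one cannot rule out, say, a cycle $S_i$ of size $o(\sqrt n)$ on which $\mathit{SMALL}$-vertices and non-$\mathit{SMALL}$ vertices alternate, in which case $N$ need not meet $s(N)$ and $U_i$ could a priori be independent.

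The missing ingredient is structural, coming from the algorithm rather than from $(P)$ or degrees: Step 2 pads $U$ to size $|\mathit{SMALL}(G)|+1$, the cycle $C$ (hence $S_0$) contains $U\cup\mathit{SMALL}(G)$, and the cycles only grow, so $|\mathit{SMALL}(G)|<\tfrac12|S_0|\le\tfrac12|S_i|$ \emph{relative to the current cycle}. Hence $\mathit{SMALL}(G)\cap S_i$ cannot cover all edges of the cycle, and there are two consecutive vertices $w_2,\,w_1=s_{S_i}(w_2)$ of $S_i$ both outside $\mathit{SMALL}(G)$. In the regime in question one also has $|V(G)\setminus V_i|<40\sqrt{n}$, so every non-$\mathit{SMALL}$ vertex of $S_i$ has a neighbour in $V_i$; thus $w_1,w_2\in N$, and their successors $s(w_2)=w_1$ and $s(w_1)$ are joined by a cycle edge, giving an edge inside $U_i$ with no recourse to $(P)$. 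This is exactly how the paper closes the case $|V_i|>n-30\sqrt{n}$; adding this observation (and citing where the inequality $|\mathit{SMALL}(G)|<\tfrac12|S_0|$ comes from) would repair your argument.
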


\begin{proof} Say we failed at some time $i$, that is: the constructed vertex set $U_i$ is an independent set. Recall that $U_i$ is the set of successors along $S_i$ of vertices in $N_G(V_i)\cap S_i$, where $V_i$ is a maximum sized connected component of $G\setminus S_i $. Let $W_i=N_G(V_i)\cap S_i$. Consider the following cases:

\begin{enumerate}

\item $|U_i| \geq n^{\frac{3}{4}}$. Let $A_1,A_2 \subseteq U_i$ be two disjoint subsets of size $\frac{1}{2}n^{\frac{3}{4}}$. So $|A_1|\cdot |A_2| = \frac{1}{4}n^{1.5}$, but $e(A_1,A_2)=0$, a contradiction.

\item $|V_i| > n - 30\sqrt{n}$. Observe two facts:
	\begin{itemize}
	\item By Def. \ref{small}, since $|V(G)\setminus V_i|< 30\sqrt{n} <40\sqrt{n}$, we get that $\forall v\in S_i\setminus \mathit{SMALL}(G):\ N_G(v)\cap V_i \neq \emptyset$;
	\item Since $|\mathit{SMALL}(G)| < \frac{1}{2}|S_0| \leq \frac{1}{2}|S_i|$, there are two vertices $w_1,w_2 \in S_i\setminus \mathit{SMALL}(G)$ such that $w_1=s_{S_i}(w_2)$.
	\end{itemize}
	So $w_1,w_2$ belong to $W_i$, and their successors are connected by an edge, which means that the algorithm could not have failed.

\item $15\sqrt{n} \leq |V_i| \leq n-30\sqrt{n}$. Observe that if the algorithm failed then $|W_i|=|U_i| \leq \min \lbrace \frac{1}{2}|S_i| , n^{3/4} \rbrace$, and therefore we have
	\begin{itemize}
	\item $|V_i| + |V(G)\setminus (V_i \cup W_i)| \geq n-n^{\frac{3}{4}}$;
	\item $|V_i| \geq  15\sqrt{n}$;
	\item $|V(G)\setminus (V_i \cup W_i)| = |V(G)\setminus V_i | - | W_i| \geq |V(G)\setminus V_i | - \frac{1}{2} | S_i| \geq \frac{1}{2} |V(G) \setminus V_i| \geq 15\sqrt{n}$.
	\end{itemize}
	So $V_i$ and $V(G) \setminus (V_i \cup W_i)$ are two sets, with $|V_i|\cdot |V(G) \setminus (V_i \cup W_i)| \geq 10n^{1.5}$, but $e(V_i,V(G)\setminus (V_i \cup W_i))=0$, a contradiction to our assumption that $G\in (P)$.

\item $|V_i|\leq 15\sqrt{n},\ |S_i| < 0.99n$. Then all connected components of $G\setminus S_i$ are of size at most $15\sqrt{n}$, and the sum of their sizes is at least $0.01n$. So the vertices of $V(G)\setminus S_i$ can be partitioned into two sets $A_1,A_2$ such that each one of them is a union of connected components, and $|A_1|,|A_2| \geq n^{\frac{3}{4}}$. But then $|A_1|\cdot |A_2| \geq n^{1.5}$ and $e(A_1,A_2)=0$, a contradiction.

\end{enumerate}

\noindent The complementing case to those already covered is when $|V_i|\leq 15\sqrt{n},\ |S_i| \geq 0.99n$, which can only occur in Stage 6.
\end{proof}

\begin{lemma}\label{failure6}
Let $G$ be a graph on $n$ vertices satisfying \emph{(P)}. Then \emph{Step 6} does not return \emph{``Failure"} on input $G$.
\end{lemma}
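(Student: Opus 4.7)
The plan is to show that for every $v \in V(G)\setminus V(S_i)$, the exhaustive search in Step 6 succeeds. First I will establish that $v$ has many neighbors on $S_i$: since $\mathit{SMALL}(G)\subseteq V(S_0)\subseteq V(S_i)$ we have $v\notin\mathit{SMALL}(G)$, so $\deg_G(v)\geq 40\sqrt{n}$, and because the connected component of $v$ in $G\setminus S_i$ has size at most $15\sqrt{n}$ (the condition for entering Step 6), the set $N:=N_G(v)\cap V(S_i)$ satisfies $|N|\geq 25\sqrt{n}$. Fix an orientation of $S_i$ with successor/predecessor maps $s,p$.

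I would then dispatch the easy cases. If $u,s(u)\in N$ for some $u$, take $E_1=\{(u,s(u))\}$ and $E_2=\emptyset$, giving $|E_1|=1$. Otherwise $s(N)\cap N=\emptyset$; if there exist $u_1\neq u_2\in N$ with $(s(u_1),s(u_2))\in E(G)$, I apply the same swap used in Step 5 with $E_1=\{(u_1,s(u_1)),(u_2,s(u_2))\}$ and $E_2=\{(s(u_1),s(u_2))\}$, yielding a valid cycle of length $|S_i|+1$; a symmetric argument handles $p(N)$.

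In the remaining case both $s(N)$ and $p(N)$ are independent in $G$, and I will use P\'osa rotation-extension. I pick $u_0\in N$ with $u_0,p(u_0)\notin\mathit{SMALL}(G)$ (possible since $|N|\geq 25\sqrt{n}$ and $|\mathit{SMALL}(G)|\leq 2\sqrt{n}$), cut $S_i$ at $(p(u_0),u_0)$ to obtain a Hamilton path $P_0$ of $V(S_i)$ from $u_0$ to $p(u_0)$, and iteratively rotate keeping $u_0$ fixed. Letting $T_k$ denote the set of possible other-endpoints after at most $k$ rotations, each rotation exchanges exactly one edge of the current path, so if $w\in T_k\cap(N\setminus\{u_0\})$ for some $k\leq 3$, then the Hamilton path from $u_0$ to $w$ differs from $S_i$ in at most $k+1$ deleted and $k$ added edges, and together with $(v,u_0),(v,w)$ yields a cycle of length $|S_i|+1$ with $|E_1|\leq 4$ and $|E_2|=|E_1|-1$.

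The main obstacle will be proving that $T_k$ meets $N$ within three rotations. The plan is to combine the classical P\'osa rotation bound --- whenever $T$ is closed under further rotations, $|N_G(T)\setminus(T\cup\{u_0\})|\leq 2|T|$ --- with the expansion afforded by Property $(P)$: any $U\subseteq V(G)$ with $|U|\geq 25\sqrt{n}$ has $|V(G)\setminus(U\cup N_G(U))|<n/250$, hence $|N_G(U)|>n-|U|-n/250$. Since $p(u_0)\notin\mathit{SMALL}(G)$ guarantees $|T_1|\geq \deg_G(p(u_0))-2\geq 40\sqrt{n}-2\geq 25\sqrt{n}$, the $(P)$-expansion activates immediately, and the P\'osa inequality $|T_{k+1}|\geq|N_G(T_k)|-2|T_k|\geq 0.99n-2|T_k|$ (which holds whenever $T_k$ is not yet closed) forces $|T_k|$ to grow past $|V(S_i)|-|N|$ within three rotations, so $T_3\cap N\neq\emptyset$ by pigeonhole. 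If for a specific $u_0$ the rotation sequence stalls before the blow-up, iterating over the at least $21\sqrt{n}$ admissible choices of $u_0\in N\setminus(\mathit{SMALL}(G)\cup s(\mathit{SMALL}(G)))$ and invoking the symmetry of rotations at the two ends of the evolving path will close the remaining cases.
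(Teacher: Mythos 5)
Your overall strategy -- a bounded number of P\'osa rotations powered by the expansion that property $(P)$ gives -- is the same in spirit as the paper's proof, and your easy cases ($v$ has two adjacent neighbours on $S_i$, or an edge inside $s(N)$/$p(N)$) are fine, as is the bookkeeping that $k\le 3$ rotations give $|E_1|\le 4$, $|E_2|=|E_1|-1$. But the engine of the hard case has a genuine gap. The inequality you call ``the P\'osa inequality'', $|T_{k+1}|\ge |N_G(T_k)|-2|T_k|$ for the truncated sets $T_k$, is not P\'osa's lemma (which concerns the set of endpoints closed under arbitrarily many rotations) and is not justified: after several rotations the current path's edge set differs from $S_i$'s, the new endpoint produced from a chord $(x,y)$ is the neighbour of $y$ on the \emph{current} path on the $x$-side, and different $x\in T_k$ have different paths, so extracting a clean lower bound for $|T_{k+1}|$ requires exactly the kind of segment/side control that the paper engineers with $c_v,c_{t_0},c$ and the sets $W_i,O_i,T_i$. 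More fatally, even granting your inequality, the endgame arithmetic fails. Your pigeonhole needs $|T_3|>|V(S_i)|-|N|\ge n-O(\sqrt n)$, i.e.\ $T_3$ must miss at most $O(\sqrt n)$ vertices. But the expansion you extract from $(P)$ for a set of size $25\sqrt n$ only bounds the uncovered part by $n/250$, which is far larger than $|N|=\Theta(\sqrt n)$; and once $|T_k|\ge n/3$ the bound $|T_{k+1}|\ge |N_G(T_k)|-2|T_k|$ is vacuous, since $N_G(T_k)$ is an external neighbourhood and so $|N_G(T_k)|\le n-|T_k|$. Concretely, your chain certifies at best $|T_2|\ge n-n/250-3|T_1|$ and then nothing useful for $|T_3|$, so ``grows past $|V(S_i)|-|N|$ within three rotations'' is false as stated, and $T_3\cap N\ne\emptyset$ does not follow. (A secondary gap: $|T_1|\ge \deg_G(p(u_0))-2$ ignores that up to $0.01n$ vertices lie outside $S_i$ in Step 6, so a non-$\mathit{SMALL}$ vertex of degree $40\sqrt n$ may have few neighbours on the path; the paper instead uses $(P)$ to \emph{choose} $u$ so that $t_0=p(u)$ has at least $20\sqrt n$ neighbours on $S_i$.)

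The correct finish is not a size pigeonhole but one more application of $(P)$ between two explicitly constructed sets whose size product exceeds $n^{1.5}/10$: after the rotations have produced an endpoint set of linear size (the paper's $O_i$, $|O_i|\ge 0.2n$), you look for a $G$-edge from it to the set of path-neighbours, on the controlled side, of the neighbourhood of the other end (the paper's $T_i=s_Q(N_{Q_2}(t_0))$, of size $\ge 10\sqrt n$), rather than trying to make the endpoint set swallow almost all of $S_i$. Restricting $v$'s and $t_0$'s neighbours to prescribed halves/quarters of $Q$ is what guarantees the rotations do not interfere and that you know which side the new endpoint falls on; without such positional control your argument cannot pin down $E_1\subseteq E(S_i)$ of size at most $4$.
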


\begin{proof}

We show that under the assumption that $G\in (P)$, the cycle $S_i$ contains two vertices $u,w$ and two edge subsets $E_1,E_2$ as described in \emph{Step 6}. Since the algorithm searches for such $u,w,E_1,E_2$ exhaustively, and only returns \emph{``Failure"} upon failing the search, this means that if $G\in (P)$ the algorithm does not fail. \\
Recall that in this stage we can assume that $|S_i| \geq 0.99n$ and that all connected components of $G\setminus S_i$ are of size at most $15\sqrt{n}$. It follows that for every $v\in V(G)\setminus S_i$ we have $|N_G(v)\cap S_i| \geq d_G(v)-|V_i| \geq 20\sqrt{n}$. Observe that if $ |N_G(v)\cap S_i| > \frac{1}{2}n$ then $v$ has two neighbours adjacent on $S_i$, say $u,w$, so setting $E_1 = (u,w),\ E_2=\emptyset$ results in a cycle as desired, so we can assume that $|N_G(v)\cap S_i| \leq \frac{1}{2}n$.\\
Let $U_i := p(N_G(v)\cap S_i)$. Since $|U_i|,|S_i\setminus U_i|\geq 20\sqrt{n}$ and $|U_i|+|S_i\setminus U_i|\geq 0.99n$, we get that $|U_i|\cdot |S_i\setminus U_i| \geq 10n^{1.5}$, and therefore $e(U_i,S_i\setminus U_i) \geq 0.9p|U_i|\cdot |S_i\setminus U_i| \geq 0.4|U_i|np$. It follows that there is some $u \in N_G(v)\cap S_i$ such that $d_{S_i}(p(u)) \geq 0.4np \geq 20\sqrt{n}$. Denote $t_0:=p(u)$, and let $Q$ be the path $\{v\} \cup S_i(u\rightarrow t_0)$.\\
Define the following three special vertices on $Q$:
\begin{itemize}
\item $c_v$ : a vertex on $Q$ such that $|N_{Q(v\rightarrow c_v)}(v)| = \lfloor \frac{1}{2}|N_{Q}(v)| \rfloor$;
\item $c_{t_0}$ : a vertex on $Q$ such that $|N_{Q(c_{t_0}\rightarrow t_0)}(t_0)| = \lfloor \frac{1}{2}|N_{Q}(t_0)| \rfloor$;
\item $c$ : a vertex on $Q$ such that $|Q(v\rightarrow c)| = \lfloor \frac{1}{2}|Q| \rfloor$.
\end{itemize}
We will assume that $c_v$ and $c$ precede $c_{t_0}$ on $Q$, and remark that the proof is quite similar for the complementing cases, in which $c_{t_0}$ precedes one or both of $c_v,c$, with some minor changes required to some of the definitions down the line.\\
Denote: $Q_1 := Q(v\rightarrow c_v),\ Q_2:= Q(c_{t_0}\rightarrow t_0),\ Q_3:= Q(v\rightarrow c)$.\\
We now aim to show that $E_1,E_2,w$ as required exist in the graph, with respect to the already chosen $u$, by using rotations and extensions.\\
Let $W_i$ be the set $N_{Q_1}(v)$ and $T_i$ the set $s_Q(N_{Q_2}(t_0))$. By our choices of $v,t_0,c_v,c_{t_0}$ we know that $|W_i|,|T_i| \geq 10\sqrt{n}$. Now, construct the set $O_i$ as follows:\\
For each vertex $x\in W_i$ and for each $y\in N_{Q_3}(p_Q(x))\setminus \{x\}$ add $s_Q(y)$ to $O_i$ if $y \in Q(v\rightarrow x)$ and add $p_Q(y)$ to $O_i$ if $y \in Q(x\rightarrow c)$.
\begin{clm}
The size $|O_i|$ is at least $0.2n$.
\end{clm}
\begin{proof}
By our construction, $|O_i| \geq |N_{Q_3}(p_Q(W_i))| - |W_i|$. If $|O_i| < 0.2 n$ then $| Q_3 \setminus N_{Q_3}(p_Q(W_i))| \geq 0.25 n - |W_i|$, and $p_Q(W_i),\ Q_3 \setminus N_{Q_3}(p_Q(W_i))$ are two sets that have no edges between them, but the product of their sizes is at least $2n^{1.5} $, a contradiction.
\end{proof}

\begin{clm}
There is an edge between $O_i$ and $T_i$.
\end{clm}
\begin{proof}
The two sets are disjoint, and $|O_i|\cdot |T_i| \geq 2n^{1.5}$.
\end{proof}

\begin{figure}[h]
    \begin{tikzpicture}[vertex/.style={draw,circle,color=black,fill=black,inner sep=1}]
      
      \pgfmathsetmacro{\alpha}{15}
      \pgfmathsetmacro{\r}{3}
      \pgfmathsetmacro{\rth}{0.707}
      \pgfmathsetmacro{\rttq}{0.866}
      \pgfmathsetmacro{\sinsf}{0.966}
      \pgfmathsetmacro{\cossf}{0.259}
      
      \node[vertex,label=above:$v$] (v) at (-0.2*\r, 1.5*\r) {};
      \node[vertex,label=below:$u$] (u) at (-0.5*\r, \rttq*\r) {};
      \node[vertex,label=above:$p_Q(w)$] (pw) at (0, \r) {};
      \node[vertex,label=below:$w$] (w) at (\cossf*\r, \sinsf*\r) {};
      \node[vertex,label=below:$c_v$] (cv) at (\rth*\r, \rth*\r) {};
      \node[vertex,label=right:$s$] (s) at (\r, 0) {};
      \node[vertex,label=right:$s_Q(s)$] (ss) at (\sinsf*\r, -\cossf*\r) {};
      \node[vertex,label=right:$c$] (c) at (\rth*\r, -\rth*\r) {};
      \node[vertex,label=below:$c_{t_0}$] (ct0) at (0, -\r) {};
      \node[vertex,label=left:$p_Q(t)$] (pt) at (-\sinsf*\r, -\cossf*\r) {};
      \node[vertex,label=left:$t$] (t) at (-\r, 0) {};
      \node[vertex,label=left:$p_{S_i}(u)\textrm{=}t_0$] (t0) at (-\rth*\r, \rth*\r) {};

      \draw[thick] (\r,0) arc [start angle=0, end angle=90-\alpha, radius=\r];
      \draw[thick] (0,\r) arc [start angle=90, end angle=135-\alpha, radius=\r];
      \draw[thick,dotted] (0,\r) arc [start angle=90, end angle=90-\alpha, radius=\r];
      \draw[thick,dotted] (\r,0) arc [start angle=0, end angle=-\alpha, radius=\r];
      \draw[thick,dotted] (-\rth*\r,\rth*\r) arc [start angle=135, end angle=135-\alpha, radius=\r];
      \draw[thick] (-\r,0) arc [start angle=180, end angle=135, radius=\r];
      \draw[thick,dotted] (-\r,0) arc [start angle=180, end angle=180+\alpha, radius=\r];
      \draw[thick] (0,-\r) arc [start angle=270, end angle=180+\alpha, radius=\r];
      \draw[thick] (0,-\r) arc [start angle=-90, end angle=-\alpha, radius=\r];
      
	  \draw[bend left=30,thick,densely dashed] (ss) to (pw);
      \draw[bend left=10,thick, densely dashed] (t) to (s);
      \draw[bend right=30,thick, densely dashed] (pt) to (t0);
      \draw[bend left=20,thick] (v) to (w); 
      \draw[bend right=20,thick] (v) to (u);

    \end{tikzpicture}
  \caption{Extention of cycle $S_i$ (oriented clockwise) to cycle $S_{i+1}$ that includes $v$, by removing the edges of $E_1$ (dotted) and adding the edges of $E_2$ (dashed) and $(v,u),(v,w)$.}\label{fig1}
\end{figure}

Let $s\in O_i,\ t\in T_i$ be such that $(s,t)\in E(G)$, and let $w \in W_i$ be a vertex that caused $s$ to be added to $O_i$. Finally, define:
\begin{itemize}
\item $E_1 := \{(u,t_0),\ (p_Q(w),w),\ (s,s_Q(s)),\ (p_Q(t),t) \}$;
\item $E_2 := \{(p_Q(w),s_Q(s)),\ (p_Q(t)),t_0),\ (s,t)\}$.
\end{itemize}

Then $E_1,E_2,u,w$ are as required by the algorithm (see Fig. 1 for illustration).

\end{proof}

\section{Concluding remarks} \label{remarks}
To summarise, we have presented an algorithm \emph{CRE} which is comprised of three aligned algorithms, in the spirit of previous results, and utilises rotations and extensions in order to find a Hamilton cycle in a graph, and proved that its expected running time on a random graph $G\sim G(n,p)$ is optimal, for $p\geq 70n^{-\frac{1}{2}}$.

We note that even if we make changes to some parameters in our algorithm, $p=\Omega \left( n^{-\frac{1}{2}} \right)$ seems to be the lowest range of probability for which our expected running time bound works, at least with our current argument. The reason for this is the existence of some bottlenecks along the proof, where smaller orders of magnitude of the edge probability no longer work. Such a bottleneck can be observed, for example, in Step 4 of \emph{CRE2}, where the algorithm tries to connect some set of paths into a cycle that contains them, by finding paths between pairs of endpoints of paths one by one. In our proof we use the fact that the total length of the paths is highly likely to be much smaller than the minimum degree of the vertices at the endpoints of the paths (that is to say that the complement event is rare, i.e., has probability $2^{2n}\cdot n^{-\omega (1)}$). This is due to the fact that, on the one hand, all of the paths' endpoints have degrees at least comparable to the expected average degree of the graph, since by our construction none of the endpoints are members of $\mathit{SMALL}(G)$ -- the set of vertices with very small degrees. On the other hand, the total number of vertices in the union of all the paths is not likely to be very big, since this vertex set contains at most $6\cdot |\mathit{SMALL}(G)|$ vertices, a size likely to be much smaller than the average degree of the graph for our parameters, as we observed that $\mathit{SMALL}(G)$ is highly likely to be of size much smaller than $np$. If $p =o\left( n^{-\frac{1}{2}} \right)$, however, then the event ``$|\mathit{SMALL}(G)|>np$" has probability $2^{-o(n)}$, and in particular it is no longer rare. In other words, the probability that one of the paths' endpoints has all its neighbours residing in the union of $\mathit{SMALL}(G)$ and previously constructed paths is $2^{-o(n)}$, and the expected runtime of \emph{CRE} might no longer even be polynomial.\\
And so, we leave it as an open question whether a polynomial expected running time Hamiltonicity algorithm exists for edge probability $p =o\left( n^{-\frac{1}{2}} \right)$.\\

\noindent \textbf{Acknowledgements.} The authors would like to express their thanks to the referees of the paper, and to Samotij Wojtek, for their valuable input towards improving the presentation of our result.\\

\end{document}